\numberwithin{equation}{section}
\numberwithin{figure}{section}
\theoremstyle{plain}
\newtheorem{theorem}{Theorem}[section]
\newtheorem{lemma}[theorem]{Lemma}
\theoremstyle{definition}
\newtheorem{definition}[theorem]{Definition}
\newtheorem{assumption}[theorem]{Assumption}
\newtheorem{remark}[theorem]{Remark}
\renewcommand{\d}{\mathrm{d}}
\newcommand{\E}{\mathbb{E}}
\title{On the Estimation of Bivariate Conditional Transition Rates}
\author{Theis Bathke
	\footnote{Carl von Ossietzky Universit\"{a}t Oldenburg, Institut f\"{u}r Mathematik, 26111 Oldenburg, Germany, theis.bathke@uni-oldenburg.de}
}
\date{\today
}
\begin{document}
	
	\maketitle
	
	\begin{abstract}
 Recent literature has found conditional transition rates to be a useful tool for avoiding Markov assumptions in multi-state models. While the estimation of univariate conditional transition rates has been extensively studied, the intertemporal dependencies captured in the bivariate conditional transition rates still require a consistent estimator. We provide an estimator that is suitable for censored data and emphasize the connection to the rich theory of the estimation of bivariate survival functions.
 Bivariate conditional transition rates are necessary for various applications in the survival context but especially in the calculation of moments in life insurance mathematics.
	\end{abstract}
	
	Keywords:  bivariate survivor function; censoring; non-Markov models; non-parametric estimation; life \& health insurance; free policy option

	\section{Introduction}
	In survival statistics, the Kaplan-Meier estimator, as seen in Kaplan \& Meier \cite{KaplanMeier1958}, is widely used to obtain non-parametric estimation of survival functions for various modelling purposes when censoring is involved. This applies to various scenarios, such as life insurance modelling, time to machine failure, or the time to illness after the application of medicine. However, this estimator can only take into account a single event and therefore cannot capture dependencies between different events included in the data. Consequently, its suitability for analysing data sets, particularly those arising from genetic epidemiological studies, for disease events of family members, or other similar dependencies, is limited. Many different estimators have been proposed to fill this gap. Dabrowska \cite{Dabrowska1988} uses two-dimensional product integrals to estimate the bivariate survival function, whilst Prentice \& Cai \cite{PrenticeCai1992} employ Volterra integral equation techniques to connect the covariance function and the marginal survivor functions with the  bivariate survivor function. Van der Laan \cite{VanderLaan1996} uses non-parametric maximum likelihood estimation, and recently, Prentice \& Zhao \cite{PrenticeZhao2016} extended the work of Dabrowska with a recursive formula for arbitrary numbers of failure times. Gill \cite{Gill1993} gives a reason why there are so many different estimators: the fact that in two and higher dimensions there are different ways to get from one point in the two-dimensional plane to another point in the two-dimensional plane. \\
	Another important concept in survival analysis is the notion of recurrent events used to study recurring episodes of infection or hospitalisation. This is often modelled by introducing multi-state models, described by Hougaard \cite{Hougaard2000}, which are mostly based on modelling with Markov assumptions. In these models, estimators often target cumulative intensity processes which provide insight into the expected number of upcoming transitions. These estimators are now known as the Nelson-Aalen estimator for transition rates and the Aalen-Johansen estimator of transition probabilities, see Aalen \cite{aalen1978nonparametricn} and Borgan, Gill \& Keiding \cite{borgan2014n}. Datta \& Satten \cite{datta2001validity} discovered that the Nelson-Aalen and Aalen-Johansen estimators remain consistent on data that does not follow the Markov assumption. \textcolor{black}{The use case for multi-state models without the Markov assumption, referred to here as non-Markov models, has been showcased by Putter and Spitoni in \cite{putter2018non}. Their simulation studies demonstrate that, in some instances, Markov estimation introduces a bias when the data does not follow the Markov assumption. In this case, there is a favourable bias-variance trade-off when using the landmark Nelson-Aalen and Aalen-Johansen estimators, which target transition rates and probabilities of a non-Markov model.}\\
	However, transition rates and transition probabilities do not hold full information about the distribution of these state processes without the Markov assumption, as they do not capture the temporal interdependencies between consecutive jump events, necessary, for example, for calculating variances or higher moments. To fill this gap, Bathke \& Christiansen \cite{bathke2022two} have recently proposed a novel framework involving bivariate transition rates but without suitable non-parametric estimators. This paper aims to address the lack of estimation procedures for general bivariate intensity rates. We demonstrate the relationship to the estimation of bivariate survival functions, particularly the works of Dabrowska \cite{Dabrowska1988} and Prentice \& Cai \cite{PrenticeCai1992}.
Our aim is to construct a landmark estimator and demonstrate its consistency through almost sure uniform convergence. This is a widely accepted method for understanding the asymptotic properties of estimators, as demonstrated by Shorack \& Wellner \cite{shorack2009empirical}. 
Bivariate rates have another important application apart from being used for classical survival analysis applications. They are also key for the calculation of life insurance premiums or reserves of insurance contracts with special payment functions in non-Markov models. \textcolor{black}{This is important because life insurance calculation is generally done in Markov or semi-Markov models, which introduce systematic model risk.} Insurance cash flows typically include transition payments from one state to another, as well as sojourn payments for remaining in the corresponding state. Transition payments can be modelled using jump processes, while sojourn payments can be modelled using indicator functions. Guibert \& Planchet \cite{guibert2018non} were able to estimate reserves in a general non-Markov multi-state model for sojourn payments. Christiansen \cite{christiansen2021calculation} extended this to reserves of insurance cash flows with both transition and sojourn payments, using path-independent payment functions. He showed that the consistency properties of the estimators can be transferred to reserve estimation. Our contribution is to show that our bivariate estimators of transition rates and probabilities can be used to estimate reserves for specific path-dependent cash flows and second moments of future liabilities, which are important for many types of risk analysis.\\
	The paper is structured as follows. In Section \ref{Chap:2} the general multi-state modelling framework is introduced, here we connect jump processes and indicator processes. In Section \ref{Chap: Section 3} we introduce our bivariate notation. Section \ref{Chap:3 forward rates} introduces the definition of bivariate conditional rates and the corresponding integral equations, which will be connected to bivariate Peano-series. Section \ref{Chap: section 4} focuses on estimating the bivariate conditional rates using a bivariate landmark Nelson-Aalen estimator, as well as the bivariate transition probabilities using the bivariate landmark Aalen-Johansen estimator. In this Section we introduce Theorem \ref{theo: convergence of Lambda and P} which is the main contribution of this paper. Section \ref{Chap: Section 5} demonstrates how these estimators can be utilised to form estimators for cumulative expected values of general cash flows with interdependencies between two time points in the context of life insurance. Section \ref{Chap: numerical example} introduces a numerical example, where the set up we introduce in Section \ref{Chap: Section 5} is used to form estimators for expected future cash flows in the case of scaled payments in life insurance.
	
	\section{Multi-state modelling framework}\label{Chap:2}
	
	Let $(\Omega,\mathcal{A},\mathbb{P})$ be a probability space with a filtration $\mathbb{F}=(\mathcal{F}_t)_{t \geq 0}$. We consider an individual with the state-process 
		\begin{align*}
		Z=(Z(t))_{t\geq 0},
	\end{align*} which describes the current state that the individual is in. This process is modelled as an adapted c\`{a}dl\`{a}g jump process on a finite state space $\mathcal{Z}$. When modelling these objects, we want to be able to describe past, present, and future developments. Thus, we assume that we are currently at time $s \geq 0$. So the time interval $[0,s]$ represents the past and the present, and the time interval $(s,\infty)$ represents the future. This time point $s$ is arbitrary but fixed throughout this paper.
	Based on $Z$ we define additional processes, first the state indicator processes $(I_i)_{i\in\mathcal{Z}}$ by
	\begin{align*}
		&I_i(t):=\mathds{1}_{\{Z(t)=i\}},\quad t \geq 0,
	\end{align*}
	and second the transition counting processes $(N_{ij})_{i,j\in\mathcal{Z}:i\neq j}$ by
	\begin{align*}
		&N_{ij}(t):=\#\{u\in(0,t]:Z(u^-)=i,Z(u)=j\},\quad t \geq 0.
	\end{align*}
	We generally assume that
	\begin{align}
		\mathbb{E}[N_{ij}(t)^2]<\infty,\quad t\geq 0\;,i,j\in\mathcal{Z},\; i\neq j,\label{eq: finite expected value of N}
	\end{align}
	which in particular implies that $Z$ has almost surely no explosions. 
	Let
	\begin{align} \label{eq: DiagSprung} \begin{split}
			N_{ii}(t)&:=-\sum_{j\in \mathcal{Z}}(N_{ij}(t)-N_{ij}(s)),\quad t>s\quad i\in\mathcal{Z}. 
		\end{split}
	\end{align}
	This construction is done so that we can use the abbreviation
	\begin{align*}
		\d N_{ii}(t)=-\sum_{j\in\mathcal{Z}}\d N_{ij}(t), \quad t>s,\quad i\in\mathcal{Z}.
	\end{align*} 
	The definition \eqref{eq: DiagSprung} and many other following definitions are only defined for $t>s$. For retrospective modelling, this restriction can be removed if necessary, see \cite{bathke2022two} for these extensions of the basic theory. Since time $s$ is fixed, we omit it in the notation, but one should keep in mind the dependence of many of our definitions on the parameter $s$.
	The following equation shows a useful direct link between the processes $(N_{ij})_{i,j\in\mathcal{Z}}$ and $(I_i)_{i\in\mathcal{Z}}$:
	\begin{align}
		I_i(t)=I_i(s)+\sum_{j\in \mathcal{Z}} \;\int\displaylimits_{(s,t]} N_{ji}(\d u),\quad t\geq s, i\in\mathcal{Z}\label{I_represent_by_N}.
	\end{align} 
	The latter integral and all following integrals in this paper are meant as path-wise Lebesgue-Stieltjes integrals, which in the case of \eqref{I_represent_by_N} only exists almost surely.
	
	The sigma algebra $\mathcal{F}_s$ represents the information available at time $s$. In practice, we often use a reduced information set $\mathcal{G}_s=\sigma(\xi)$ for evaluations, which is generated by a discrete random variable $\xi$. For general two-dimensional computations we assume
	\begin{align}\label{mathcalGs}
		\sigma(Z(s))\subseteq \mathcal{G}_s\subseteq \mathcal{F}_s.
	\end{align} This assumption is needed for calculating bivariate conditional transition rates and bivariate transition probabilities in the general non-Markov model, see \cite{bathke2022two}.\\
	The special case $\mathcal{G}_s= \sigma(Z(s))$ is known as the as-if Markov model, since we use only data that would have been used in the Markov model as well.
	The choice of $\mathcal{G}_s$ can be influenced by many factors. Some of these are listed below: \begin{itemize}
		\item Numerical complexity,
		\item Lack of data,
		\item Regulatory requirements.
	\end{itemize}
 Currently, Markov modelling is widely used in practice. Therefore, most of the available data is of Markov type. Thus, the natural extension would be the as-if Markov model, as it combines classical Markov information with a non-Markov model. In addition, the numerical complexity increases whenever we work with a broader information model. \textcolor{black}{ We therefore focus on an estimation procedure that conditions on discrete random variables, which includes the as-if Markov model because $\#\mathcal{Z}<\infty$.}

	\section{Notation}\label{Chap: Section 3}
	For two-dimensional variables, definitions of processes and integrals, we use $\boldsymbol t=(t_1,t_2)$, $f(\boldsymbol u^-)=f(u_1^-,u_2^-)$ for a bivariate upper continuous function $f$ that has uniform lower limits, and $f(\d \boldsymbol u)=f(\d u_1,\d u_2)$ for a two-variable function $f$ with finite two-dimensional total variation, see the Appendix \ref{def:variation} for the definition of the total variation used in this paper. 
	In addition, we use partial ordering on $\mathbb{R}^2$, meaning that $\boldsymbol x\leq \boldsymbol y$ if and only if $x_1\leq y_1$ and $x_2\leq y_2$. We extend this notation so that $\boldsymbol x$ can be set into relation with a one-dimensional constant. For example, $\boldsymbol x\leq y$ iff $x_1\leq y$ and $x_2\leq y$. Consequently, $(s,\boldsymbol t]$ is a two-dimensional rectangle that is open at the bottom and the left boundaries and closed at the top and right boundaries with the corner points $(s,s)$ and $(t_1,t_2)$.  Apart from two-dimensional variables, we use various indices to model different jumps at different times. For this we introduce the two-dimensional indices $\boldsymbol i=(i_1,i_2)$. 
	\section{ Conditional Transition Rates}\label{Chap:3 forward rates}
	
	This section introduces the conditional transition rates of Christiansen \& Furrer \cite{christiansen2021calculation} and Bathke \& Christiansen \cite{bathke2022two}. We assume that we are currently at time $s$ and have the information $\mathcal{G}_s=\sigma(\xi)$ available, where $\xi$ is a discrete random variable called the landmark. Since the parameter $s$ is fixed, we omit it in the notation
	
	Let $P_{z,i}=(P_{z,i}(t))_{t>s}$ and $Q_{z,ij}=(Q_{z,ij}(t))_{t>s}$ for $ i,j \in \mathcal{Z}$ be the almost surely unique c\`{a}dl\`{a}g paths that satisfy
	\begin{align*}
		P_{z,i}(t)&=\E[ I_i(t)|\xi=z],\\
		Q_{z,ij}(t) &= \E[ N_{ij}(t)|\xi=z].
	\end{align*}
	
	Let $\boldsymbol P_{z,\boldsymbol i}= (\boldsymbol P_{z,\boldsymbol i}(\boldsymbol t))_{t_1,t_2 >s}$ and $\boldsymbol Q_{z,\boldsymbol i\boldsymbol j}=(\boldsymbol Q_{z,\boldsymbol i \boldsymbol j}(\boldsymbol t))_{t_1,t_2 >s}$ for $\boldsymbol i,\boldsymbol j \in\mathcal{Z}^2$ be the almost surely unique surfaces that are c\`{a}dl\`{a}g in each variable and satisfy
	\begin{align*}
		\boldsymbol P_{z,\boldsymbol i}(\boldsymbol t)&=\mathbb{E}[I_{i_1}(t_1)I_{i_2}(t_2)|\xi=z],\\
		\boldsymbol Q_{z,\boldsymbol i \boldsymbol j}(\boldsymbol t)&=\mathbb{E}[ N_{i_1j_1}(t_1)N_{i_2j_2}(t_2)|\xi=z],
	\end{align*} for all these definitions refer to a regular conditional probability $\mathbb{P}(\cdot|\xi=z)$.	These functions are almost surely unique because the c\`{a}dl\`{a}g property ensures a almost surely unique definition by their values at rational times, which are countably many.
\textcolor{black}{
\begin{remark}
	Following section \ref{Chap:2}, $\xi=Z(s)$ is a natural example of a landmark, as seen for example in the as-if Markov model. In this case, $z\in\mathcal{Z}$. But the landmark can also include discrete external landmarks such as gender. So, in general, $z$ need not to be in $\mathcal{Z}$. The fact that the landmark $\xi$ is discrete is important for the landmarking ideas we use in this paper. See \cite{bladt2023conditional} for an approach to conditioning on continuous information.
\end{remark}}
	\begin{definition}\label{DefLambda}
		For $ i,j \in \mathcal{Z}$ and $t\in(s,\infty)$ let the function $(\Lambda_{z,ij}(t))_{t \geq s}$ be defined by
		\begin{align}\label{DefEqLambda}
			\Lambda_{z,ij}(t) = \;\int\displaylimits_{(s,t]} \frac{\mathds{1}_{\{{P}_{z,i}(u^-)>0\}}}{{P}_{z,i}(u^-)} Q_{z,ij}(\d u),
		\end{align}
		and for $\boldsymbol i,\boldsymbol j \in \mathcal{Z}^2$ and $\boldsymbol t\in(s,\infty)^2$ let the function  $(\boldsymbol\Lambda_{z,\boldsymbol i \boldsymbol j}(\boldsymbol t))_{\boldsymbol t \geq s}$
		be defined by
		\begin{align}\label{DGL of g and Gamma}
			\boldsymbol \Lambda_{z,\boldsymbol i \boldsymbol j}(\boldsymbol t) = \;\int\displaylimits_{(s,\boldsymbol t]} \frac{\mathds{1}_{\{{\boldsymbol P}_{z,\boldsymbol i}(\boldsymbol u^-)> 0\}}}{{\boldsymbol P}_{z,\boldsymbol i}(\boldsymbol u^-)}\boldsymbol Q_{z,\boldsymbol i \boldsymbol j}(\d \boldsymbol u),
		\end{align}where we use the convention $\frac{0}{0}:=0$.
	\end{definition}
	In case of $t>s$ and $\boldsymbol t> s$ we denote $ \Lambda_{z,ij}(\d t )$ as (univariate) conditional transition rate and $\Lambda_{z,\boldsymbol i \boldsymbol j}(\d \boldsymbol t)$ as bivariate conditional transition rate. In order to ensure existence of \eqref{DefEqLambda} and \eqref{DGL of g and Gamma} we generally assume that
	\begin{align}\label{IntegrabilCond}\begin{split}
			&\;\Lambda_{z,ij}(t) < \infty, \quad t \geq s,\\
			&\;\boldsymbol \Lambda_{z,\boldsymbol i \boldsymbol j}(\boldsymbol t)<\infty,\quad t \geq s,
	\end{split}\end{align}
	almost surely for all $i,j\in\mathcal{Z},\boldsymbol i,\boldsymbol j \in\mathcal{Z}$.
	According to Christiansen \cite{christiansen2021calculation} it holds that
	\begin{align}\label{eq: GenKolmForwardEquation}
		P_{z,i}(t)&=P_{z,i}(s)+\sum_{j\in\mathcal{Z}}\;\int\displaylimits_{(s,t]} {P}_{z,j}(u^-){\Lambda}_{z,ji}(\d u), \quad t \geq s
	\end{align}
	for all $i \in \mathcal{Z}$. This is equivalent to the product integral\begin{align*}
		P_z(t)=P_z(s)\Prodi_{(s,t]}(Id+\Lambda_z(\d u)),\quad t>s.
		\end{align*} 
	This is a generalisation of Kolmogorov's forward equation to non-Markov models.
	Bathke \& Christiansen \cite{bathke2022two} extended this result to bivariate conditional transition rates with the integral equation
		\begin{align}\label{2dimKolmForwEq}\begin{split}
				\boldsymbol P_{z,\boldsymbol i}(\boldsymbol t)&=\boldsymbol P_{z,\boldsymbol i}(\boldsymbol s)
				+P_{z,i_2}(s)\left(P_{z,i_1}(t_1)-P_{z,i_1}(s)\right) +P_{z,i_1}(s)\left(P_{z,i_2}(t_2)-P_{z,i_2}(s)\right)\\
				&\quad +\;\int\displaylimits_{(s,\boldsymbol t]}\sum_{\substack{\boldsymbol j\in\mathcal{Z}^2}}{\boldsymbol P}_{z,\boldsymbol j}(\boldsymbol u^-) {\boldsymbol\Lambda}_{z,\boldsymbol j \boldsymbol i}(\d \boldsymbol u),
		\end{split}\end{align}
		for $\boldsymbol t \geq s$ and $\boldsymbol i=(i_1,i_2) \in \mathcal{Z}^2$.
		
		 These integral equations are similar to the inhomogeneous Volterra equations found in the survival setting, see Prentice and Cai \cite{PrenticeCai1992}.\\
		It should be noted here that these processes depend on the current time $s$, which is omitted only for the sake of conciseness.
	\section{Bivariate Conditional Nelson-Aalen and Aalen-Johansen Estimation }\label{Chap: section 4}
	\textcolor{black}{We begin this section with the general non-parametric estimation setup, which is similar to the non-Markov setup in Christiansen \cite{christiansen2021calculation}}. \textcolor{black}{ The non-parametric approach of this paper is deeply rooted in the fact that many practitioners, for example in the German actuarial community, use purely non-parametric estimators. Moreover, the non-parametric approach can be used as a preview to better calibrate parametric models. First, we present the general estimation setup.} \\
	   For the rest of this paper we assume that we are observing $n\in\mathbb{N}$ individuals:\begin{align*}
		(Z^1(t))_{t\in[0,R^1]},\ldots,(Z^n(t))_{t\in[0,R^n]},
	\end{align*}
	where $R^m$, $m\in\mathbb{N}_{\leq n}$ is the right censoring time of the data. The additional process $N_{ij}^{m}$ and the random variable $\xi^m$ define the counting process and the information of the $m$-th observed individual. For the following estimators and the convergence results of these estimators, we make some key assumptions about our data. \begin{assumption}\label{as:statistical assumptions}\phantom{ }
		\begin{itemize}
			\item[a)] All observations $(Z^i,R^i)_{i\leq n}$ are independent and identically distributed.
			\item[b)] $R^1$ is stochastically independent of $(Z^1(t))_{t\geq s}$ and $\xi^1$ and almost surely greater than $s$.
		\end{itemize}
	\end{assumption}
It means that we assume that $\xi^m$ is not affected by censoring. That is, it is observable in the data at time $s$ for all individuals.  \textcolor{black}{
\begin{remark}
The estimation model could be defined exactly as Christiansen does in \cite{christiansen2021calculation}, but we think that left censoring and weaker conditions on right censoring are not worth the notational overhead. One main application of censoring or truncation in the actuarial context in section \ref{Chap: Section 5} is that the portfolio of insured, and thus the data we use for estimation, is not fully closed, so we have data that may be right censored but is rarely left censored.
\end{remark}}
\textcolor{black}{\begin{remark} 
		There are many other approaches to the modelling of censoring than the one we present in this paper. There is the example of inverse probability of censoring weighting methods, which allow for additional dependency between the jump process and the censoring mechanism. This in turn requires a parametric censoring model, which distinguishes their approach from our fully non-parametric one, as already mentioned by Bladt and Furrer in \cite{bladt2023conditional}. In addition, current work on IPCW methods in multi-state models uses a simpler setup, see the work of Mostajabi and Datta \cite{mostajabi2013nonparametric} and Siriwardhana, Kulasekera and Datta \cite{siriwardhana2018flexible}.
\end{remark}}
	We continue with a short introduction to the univariate landmark Nelson-Aalen and Aalen-Johansen estimators, see Aalen \cite{aalen1978nonparametricn}, Christiansen \cite{christiansen2021calculation}, and Bladt \& Furrer \cite{bladt2023conditional} for reference.\\
 Let $R$ be a random variable that describes the right censoring and $\xi$ the random variable that generates the information $\mathcal{G}_s$. This allows us to define the expected value of the observable censored jump processes and indicator processes.\begin{align*}
		P_{z,j}^c(t)&:=\mathbb{E}[\mathds{1}_{\{Z_t=j\}}\mathds{1}_{\{t<R\}}\mathds{1}_{\{\xi=z\}}],\\
	Q_{z,jk}^c(t)&:=\mathbb{E}[N_{jk}(t\wedge R)\mathds{1}_{\{\xi=z\}}],\end{align*} for $j,k\in\mathcal{Z}$ and $t>s$. This in turn allows us to define censored  transition rates as
	\begin{align*}
		\Lambda_{z,jk}^{c}(t)&:=\int\displaylimits_{(s,t]}\frac{\mathds{1}_{\{P_{z,j}^c(u^-)>0\}}}{P_{z,j}^c(u^-)}Q_{z,jk}^c(\d u),
	\end{align*}for $j,k\in\mathcal{Z}$ and $t>s$. \\
All of these processes depend on the index $z$ which takes values in the image of $\xi$. For this approach to give comprehensible information, the random variable $\xi$ must only take a finite number of values. This is the reason why this approach only works for a finite information space.\\
One of the reasons for using transition rates in Markov and non-Markov modelling is the following equality. If we take a time $\tau$ such that $\mathbb{P}(\tau\leq R)>0$, we get $$\Lambda_{z,jk}(t)=\Lambda_{z,jk}^c(t),$$ for $j,k\in\mathcal{Z}$ and $t\in(s,\tau]$, see for example the ideas used by Glidden \cite{glidden2002robust} or Christiansen \cite{christiansen2021calculation} in the proof of Theorem 7.3. This means that the censoring does not affect the conditional transition rates as long as $t<\tau$.\\
Estimating processes such as $\Lambda_{z,jk}(t)$ is generally challenging, since one might divide by zero. Bladt \& Furrer \cite{bladt2023conditional} use what they call a perturbation of $\Lambda_{z,jk}(t)$. Applying their idea onto the univariate conditional transition rates leaves us with\begin{align*}
	\Lambda_{z,jk}^{(\epsilon)}(t)&:=\int\displaylimits_{(s,t]}\frac{1}{P_{z,j}^c(u^-)\vee \epsilon}Q_{z,jk}^c(\d u),\\
	P_z^{(\epsilon)}(t)&:=P_z(s)\Prodi_{(s,t]}(Id+\Lambda_z^{(\epsilon)}(\d u)),
\end{align*}for $j,k\in\mathcal{Z}$, $t>s$ and $\epsilon>0$. The parameter $\epsilon$ will be used as the perturbation factor for the rest of this paper. The next step is to define the corresponding estimators for the univariate conditional transition rates and probabilities.
 \begin{align*}	I_{z,j}^{(n)}(t)&:=\frac{1}{n}\sum_{m=1}^n\mathds{1}_{\{\xi^m=z\}}\mathds{1}_{\{t\leq R^m\}}\mathds{1}_{\{Z_t^m=j\}},\\
	{N}^{(n)}_{z,jk}(t)&:=\frac{1}{n}\sum_{m=1}^n\mathds{1}_{\{\xi^m=z\}}N_{jk}^m(t\wedge R^m)\\
	\Lambda_{z,jk}^{(n,\epsilon)}(t)&:=\int\displaylimits_{(s,t]}\frac{1}{I_{z,j}^{(n)}(u^-)\vee \epsilon}N_{z,jk}^{(n)}(\d u),\\
	P_z^{(n,\epsilon)}(t)&:=P_z^{(n)}(s)\Prodi_{(s,t]}(Id+\Lambda_z^{(n,\epsilon)}(\d u)),
	\end{align*}
where $P_{z}^{(n)}(s):=\left(\frac{I^{(n)}_{z,j}(s)}{\sum_{i}I^{(n)}_{z,i}(s)}\right)_{j\in\mathcal{Z}}$, $j,k\in\mathcal{Z}$, and $\epsilon>0$.\\
 \noindent
The next step is to use these ideas for the estimation of bivariate conditional transition rates. \begin{definition}
 	For $\boldsymbol{t}=(t_1,t_2)\in(s,\infty)^2$, and $\boldsymbol i=(i_1,i_2),\boldsymbol j=(j_1,j_2)\in\mathcal{Z}^2$ we want to estimate expected values of the censored jump processes and indicator processes:\begin{align*}
 		\boldsymbol P_{z,\boldsymbol i}^c(\boldsymbol t)&:=\mathbb{E}[I_{i_1}(t_1)I_{i_2}(t_2)\mathds{1}_{\{\boldsymbol t<R\}}\mathds{1}_{\{\xi=z\}}],\\ 		\boldsymbol Q_{z,\boldsymbol{ij}}^c(\boldsymbol t)&:=\mathbb{E}[N_{i_1j_1}(t_1\wedge R)N_{i_2j_2}(t_2\wedge R)\mathds{1}_{\{\xi=z\}}],\end{align*}
 	and the corresponding bivariate conditional transition rates:
 	\begin{align*}
 		\boldsymbol \Lambda_{z,\boldsymbol{ij}}^{c}(\boldsymbol t)&:=\int\displaylimits_{(s,\boldsymbol t]}\frac{\mathds{1}_{\{\boldsymbol P^c_{z,\boldsymbol i}(\boldsymbol u^-)>0\}}}{\boldsymbol P^{c}_{z,\boldsymbol{i}}(\boldsymbol u^-)}\boldsymbol Q_{z,\boldsymbol{ij}}^c(\d \boldsymbol u).
 	\end{align*} 
 Again, $R$ is a random variable which describes right censoring for the state process.
 \end{definition}
One of the reasons for using conditional transition rates in the univariate setting is the fact that they are effective even on censored data. This is one of the main reasons for using transition rates in both Markov and non-Markov models. The following lemma captures this feature in the bivariate situation.
\begin{lemma}\label{lem:change between censored and non-censored data} Bivariate censored conditional transition rates satisfy the equation $$\boldsymbol \Lambda_{z,\boldsymbol{ij}}(\boldsymbol t)=\boldsymbol \Lambda_{z,\boldsymbol{ij}}^c(\boldsymbol t),$$ for $\boldsymbol t \in(s,T]^2$ and $\boldsymbol i,\boldsymbol j\in\mathcal{Z}^2$, if $\mathbb{P}(\boldsymbol t\leq R)>0$.
\end{lemma}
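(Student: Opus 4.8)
The plan is to transport to two dimensions the factorisation argument that underlies the univariate identity $\Lambda_{z,jk}=\Lambda^{c}_{z,jk}$ recalled above. The point is that, by Assumption~\ref{as:statistical assumptions}b), the censoring time $R$ is independent of the post-$s$ state path $(Z(t))_{t\geq s}$, and since $R>s$ almost surely the landmark $\xi$ is always observed before censoring sets in; hence each of the censored quantities $\boldsymbol P^{c}_{z,\boldsymbol i}$ and $\boldsymbol Q^{c}_{z,\boldsymbol{ij}}$ factors as the corresponding uncensored quantity times one and the same strictly positive scalar weight $c(\boldsymbol u):=\mathbb{P}(\boldsymbol u\leq R)\,\mathbb{P}(\xi=z)$, which then cancels in the ratio defining $\boldsymbol\Lambda^{c}_{z,\boldsymbol{ij}}$ in Definition~\ref{DefLambda}. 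Throughout I use the partial-order conventions of Section~\ref{Chap: Section 3}, so that $\mathds{1}_{\{\boldsymbol t<R\}}=\mathds{1}_{\{t_1\vee t_2<R\}}$ and $\mathds{1}_{\{\boldsymbol u\leq R\}}=\mathds{1}_{\{u_1\vee u_2\leq R\}}$.

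First I would treat the denominator. Independence gives $\boldsymbol P^{c}_{z,\boldsymbol i}(\boldsymbol t)=\mathbb{P}(t_1\vee t_2<R)\,\mathbb{P}(\xi=z)\,\boldsymbol P_{z,\boldsymbol i}(\boldsymbol t)$; here the first factor depends on $\boldsymbol t$ only through $t_1\vee t_2$ and is monotone, while $\boldsymbol P_{z,\boldsymbol i}$ has uniform lower limits, so passing to bivariate lower limits yields $\boldsymbol P^{c}_{z,\boldsymbol i}(\boldsymbol u^-)=c(\boldsymbol u)\,\boldsymbol P_{z,\boldsymbol i}(\boldsymbol u^-)$. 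For the integrating measure I would write $N_{i_1j_1}(t_1\wedge R)=\int_{(0,t_1]}\mathds{1}_{\{u_1\leq R\}}\,N_{i_1j_1}(\d u_1)$ and similarly in the second coordinate, multiply the two, recognise $N_{i_1j_1}(\d u_1)\,N_{i_2j_2}(\d u_2)$ as the bivariate measure induced by $\boldsymbol t\mapsto N_{i_1j_1}(t_1)N_{i_2j_2}(t_2)$, and then take $\mathbb{E}[\,\cdot\,\mathds{1}_{\{\xi=z\}}]$. An application of Fubini--Tonelli --- legitimate because \eqref{eq: finite expected value of N} together with Cauchy--Schwarz bounds $\mathbb{E}[N_{i_1j_1}(t_1)N_{i_2j_2}(t_2)]<\infty$ --- and the independence of $R$ give $\boldsymbol Q^{c}_{z,\boldsymbol{ij}}(\boldsymbol t)=\int_{(0,\boldsymbol t]}c(\boldsymbol u)\,\boldsymbol Q_{z,\boldsymbol{ij}}(\d\boldsymbol u)$, i.e.\ $\boldsymbol Q^{c}_{z,\boldsymbol{ij}}(\d\boldsymbol u)=c(\boldsymbol u)\,\boldsymbol Q_{z,\boldsymbol{ij}}(\d\boldsymbol u)$ as (positive, finite-variation) measures.

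Finally I would combine the two. For $\boldsymbol u\in(s,\boldsymbol t]$ one has $\{\boldsymbol t\leq R\}\subseteq\{\boldsymbol u\leq R\}$, hence $c(\boldsymbol u)\geq\mathbb{P}(\boldsymbol t\leq R)\,\mathbb{P}(\xi=z)>0$, using the hypothesis $\mathbb{P}(\boldsymbol t\leq R)>0$ and $\mathbb{P}(\xi=z)>0$ for $z$ in the image of the discrete variable $\xi$. Substituting the two factorisations into the definition of $\boldsymbol\Lambda^{c}_{z,\boldsymbol{ij}}(\boldsymbol t)$, the strictly positive weight $c(\boldsymbol u)$ cancels between $\boldsymbol P^{c}_{z,\boldsymbol i}(\boldsymbol u^-)^{-1}$ and the density of $\boldsymbol Q^{c}_{z,\boldsymbol{ij}}$ with respect to $\boldsymbol Q_{z,\boldsymbol{ij}}$; moreover $\mathds{1}_{\{\boldsymbol P^{c}_{z,\boldsymbol i}(\boldsymbol u^-)>0\}}=\mathds{1}_{\{\boldsymbol P_{z,\boldsymbol i}(\boldsymbol u^-)>0\}}$ and the convention $0/0=0$ takes care of the set $\{\boldsymbol P_{z,\boldsymbol i}(\boldsymbol u^-)=0\}$, so that what remains is exactly $\boldsymbol\Lambda_{z,\boldsymbol{ij}}(\boldsymbol t)$; the same computation shows the integral on the censored side is finite whenever the one in \eqref{IntegrabilCond} is.

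I expect the main obstacle to be the measure-theoretic bookkeeping rather than the idea itself: making rigorous the interchange of the conditional expectation with the two-dimensional Lebesgue--Stieltjes integral in the $\boldsymbol Q^{c}$ step (finiteness via \eqref{eq: finite expected value of N}, then Fubini, and verifying that $\boldsymbol Q^{c}_{z,\boldsymbol{ij}}$ is again the distribution function of a finite positive measure), and justifying the bivariate lower-limit manipulation for $\boldsymbol P^{c}$ within the class of upper-continuous surfaces with uniform lower limits introduced in Section~\ref{Chap: Section 3}.
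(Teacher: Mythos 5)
Your proposal follows essentially the same route as the paper's own proof: both factor out the scalar weight $\mathbb{P}(\xi=z)\,\mathbb{P}(\boldsymbol u\leq R)$ from $\boldsymbol P^{c}_{z,\boldsymbol i}(\boldsymbol u^-)$ and from the measure $\boldsymbol Q^{c}_{z,\boldsymbol{ij}}(\d\boldsymbol u)$ using the independence in Assumption~\ref{as:statistical assumptions}, then cancel it in the ratio defining $\boldsymbol\Lambda^{c}_{z,\boldsymbol{ij}}$. The only cosmetic differences are that you invoke Fubini--Tonelli directly where the paper routes the same interchange through Campbell's theorem, and that you dismiss the degenerate case by restricting $z$ to the image of $\xi$ whereas the paper spells out $\mathbb{P}(\xi=z)=0$ separately via the $0/0=0$ convention.
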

\begin{proof}
	For $\boldsymbol i,\boldsymbol j\in \mathcal{Z}^2$, we use Campbell's theorem \eqref{eq:campbell}, the law of iterated expectation, and Assumption \ref{as:statistical assumptions}: \begin{align*}
	\boldsymbol Q_{z,\boldsymbol{ij}}^c(\d \boldsymbol u)&=\mathbb{E}[\mathds{1}_{\{\xi=z\}}\mathds{1}_{\{\boldsymbol u\leq R\}}N_{i_1j_1}(\d u_1)N_{i_2j_2}(\d u_2)]\\
	&=\mathbb{E}[\mathds{1}_{\{\xi=z\}}\mathbb{E}[\mathds{1}_{\{\boldsymbol u\leq R\}}|(Z_t)_{t\geq s}]N_{i_1j_1}(\d u_1)N_{i_2j_2}(\d u_2)]\\
	&=\mathbb{P}(\xi=z)\mathbb{E}[\mathds{1}_{\{\boldsymbol u\leq R\}}]\mathbb{E}[N_{i_1j_1}(\d u_1)N_{i_2j_2}(\d u_2)|\xi=z]\\
	&=\mathbb{P}(\xi=z)\mathbb{P}(\boldsymbol u\leq R)\boldsymbol Q_{z,\boldsymbol{ij}}(\d \boldsymbol u),
	\end{align*} and with similar calculations $$\boldsymbol P_{z,\boldsymbol i}^c(\boldsymbol u^-)=\boldsymbol P_{z,\boldsymbol i}(\boldsymbol u^-)\mathbb{P}(\xi=z)\mathbb{P}(\boldsymbol u\leq R).$$ 
	 If $\mathbb{P}(\boldsymbol u\leq R)>0$ and $\boldsymbol u> s$, we can plug these equations into the definition of $\boldsymbol \Lambda^c_{\boldsymbol{ij}}(\boldsymbol t)$ and  the factor $\mathbb{P}(\xi=z)\mathbb{P}(\boldsymbol u\leq R)$ cancels out. In the case that $\mathbb{P}(\xi=z)=0$ this statement is still true, because then $\boldsymbol\Lambda_{\boldsymbol{ij}}^c(\boldsymbol t)$ is equal to $0$ due to the previous calculations and $\boldsymbol\Lambda_{\boldsymbol{ij}}(\boldsymbol t)$ is equal to $0$ due to the convention $\frac{0}{0}=0$. Thus, we get the desired equation.
\end{proof}
We can conclude that for the estimation of $\boldsymbol \Lambda_{z,\boldsymbol{ij}}(\boldsymbol t)$, an estimator for $\boldsymbol \Lambda^c_{z,\boldsymbol{ij}}(\boldsymbol t)$ works well, if $\boldsymbol{t}<\boldsymbol \tau$ where we define the endpoint $\boldsymbol \tau$ to be a time-point that satisfies $$\mathbb{P}(\boldsymbol \tau\leq R)>0.$$ In the rest of this paper, we always assume that $\boldsymbol\tau$ satisfies this condition and is greater than $s$. \textcolor{black}{ In practice, $\tau$ can be understood as the right endpoint of the data window, we are able to observe and is mostly predetermined by the application. }
We have difficulties estimating a process like $\boldsymbol \Lambda^c_{z,\boldsymbol{ij}}(\boldsymbol t)$ in the case that $\boldsymbol P^{c}_{z,\boldsymbol{i}}(\boldsymbol u^-)$ is close to $0$. There are different strategies to circumvent this problem. See for example IV.1.2 from Andersen, Borgan and Gill \cite{andersen2012statistical} where they use assumptions on the behaviour of jump processes to circumvent this problem. For the estimation of bivariate survival functions this gets solved by the fact that $ P^{c}( u-)$ goes to zero monotonically as $u$ goes to infinity, which does not generally happen in a multi-state model. We use the perturbation idea of Bladt \& Furrer \cite{bladt2023conditional}, which we adapt from the univariate case. This corresponds to the new process $\boldsymbol\Lambda_{z,\boldsymbol{ij}}^{(\epsilon)}(\boldsymbol t)$ with \begin{align*}
	\boldsymbol\Lambda_{z,\boldsymbol{ij}}^{(\epsilon)}(\boldsymbol t)&:=\int\displaylimits_{( s,\boldsymbol t]}\frac{1}{\boldsymbol P^{c}_{z,\boldsymbol{i}}(\boldsymbol u-)\vee \epsilon}\boldsymbol Q_{z,\boldsymbol{ij}}^c(\d \boldsymbol u),
\end{align*}for $\epsilon>0$, $\boldsymbol i,\boldsymbol j\in \mathcal{Z}^2$ and $\boldsymbol t>s$. This $\boldsymbol\Lambda_{z,\boldsymbol{ij}}^{(\epsilon)}(\boldsymbol t)$ is estimated as a replacement for $\boldsymbol\Lambda_{z,\boldsymbol{ij}}(\boldsymbol t)$.\\
In practice, the use of $\boldsymbol\Lambda^{(\epsilon)}_{z,\boldsymbol{ij}}$ is hardly a problem for two reasons. First, in the proof of Theorem \ref{theo: convergence of Lambda and P}, we see that \begin{align*}
	\boldsymbol\Lambda_{z,\boldsymbol{ij}}(\boldsymbol t)=\lim_{\epsilon\rightarrow\infty}\boldsymbol\Lambda_{z,\boldsymbol{ij}}^{(\epsilon)}(\boldsymbol t),
\end{align*}which means that we get arbitrarily close to the unperturbed conditional transition rates. Second, for practical calculations and estimates, we generally use discretisation of our conditional transition rates and probabilities, i.e. a discretisation in the time and value axis. This means that the only way to get infinitely close to zero is to make a jump to zero. These jumps to zero are not a problem if $\epsilon$ is taken to be smaller than the smallest jump. See Bladt \& Furrer \cite{bladt2023conditional} example after Definition 2.4 for similar reasoning in the case of perturbation of the univariate conditional transition rates. Furthermore, in practice a complete convergence of transition probabilities to zero can be stopped anyway by a forced transition of states. Examples of this are transitions to annuity at certain ages. In implementation, this leads to forced transitions, for example, into the state "dead" or "retired". These forced transitions prevent the different transition probabilities from converging to zero. So in practice we usually get $$\boldsymbol\Lambda_{z,\boldsymbol{ij}}(\boldsymbol t)=\boldsymbol\Lambda_{z, \boldsymbol{ij}}^{(\epsilon)}(\boldsymbol t)$$ for $t\in(s,\boldsymbol \tau]$, $\boldsymbol i,\boldsymbol j\in\mathcal{Z}^2$, and a fixed $\epsilon>0$.
\noindent
We start by defining the estimators for the expected value of censored jump processes and indicator processes.

	\begin{align*}
		\boldsymbol I_{z,\boldsymbol i}^{(n)}(\boldsymbol t)&:=\frac{1}{n}\sum_{m=1}^n\mathds{1}_{\{\xi^m=z\}}\mathds{1}_{\{\boldsymbol t<R^m\}}\mathds{1}_{\{Z_{t_1}^m=i_1\}}\mathds{1}_{\{Z_{t_2}^m=i_2\}},\\ 
			\boldsymbol N_{z,\boldsymbol{ij}}^{(n)}(\boldsymbol t)&:=\frac{1}{n}\sum_{m=1}^n\mathds{1}_{\{\xi^m=z\}}N_{i_1j_1}^m(t_1\wedge R^m)N_{i_2j_2}^m(t_2\wedge R^m),\\ \end{align*} for $\boldsymbol i,\boldsymbol j\in\mathcal{Z}^2$ and $\boldsymbol t\in[s,\infty)$.
	These can be used to define the bivariate landmark Nelson Aalen estimator \begin{align*}
		\boldsymbol\Lambda_{z,\boldsymbol{ij}}^{(n,\epsilon)}(\boldsymbol t)&:=\int\displaylimits_{( s,\boldsymbol t]}\frac{1}{\boldsymbol I^{(n)}_{z,\boldsymbol i}(u^-)\vee \epsilon}\boldsymbol N^{(n)}_{z,\boldsymbol{ij}}(\d u),\end{align*}
	and bivariate landmark Aalen Johansen estimators as $P_{z,ik}^{(n,\epsilon)}$, the unique solution to the integral equation
	\begin{align}\begin{split}
		\boldsymbol P_{z,\boldsymbol i}^{(n,\epsilon)}(\boldsymbol t)&=\boldsymbol P_{z,\boldsymbol i}(s)
		+P_{z,i_2}(s)\left(P_{z,i_1}^{(n,\epsilon)}(t_1)-P_{z,i_1}(s)\right)+P_{z,i_1}(s)\left(P_{z,i_2}^{(n,\epsilon)}(t_2)-P_{z,i_2}(s)\right)\\
		 &\quad +\int\displaylimits_{(s,\boldsymbol t]}\sum_{\substack{ \boldsymbol j\in\mathcal{Z}^2}}{\boldsymbol P}^{(n,\epsilon)}_{z,\boldsymbol j}(\boldsymbol u^-) {\boldsymbol\Lambda}^{(n,\epsilon)}_{z,\boldsymbol{ji}}(\d \boldsymbol u),\label{eq: zwei-dimensionale Kolmogorov Gleichung}
		 \end{split}
	\end{align}for $\boldsymbol i,\boldsymbol j\in\mathcal{Z}^2$ and $\boldsymbol t>s$, where the uniqueness comes from Theorem 4.1 from Bathke \& Christiansen \cite{bathke2022two}.

\textcolor{black}{
	\begin{remark}
		In the construction of the estimation model, we assume that we observe individuals until they are censored. 
		The construction of the landmark Nelson-Aalen estimator does not depend on this knowledge. We only need to observe the individuals until they reach an absorption time, i.e. they jump into an absorption state. This is the case because there can be no jumps after the absorption time, so $\boldsymbol\Lambda_{z,\boldsymbol{ij}}^{(n,\epsilon)}(\boldsymbol t)$ does not depend on this information. And since the bivariate landmark Aalen-Johansen estimator only depends on the landmark Nelson-Aalen estimator, this information is not really needed.
\end{remark}}

To achieve convergence of these estimators, it is necessary to ensure convergence of all the univariate and bivariate estimators. The process begins with the almost certain uniform convergence in $n$ of the univariate processes.

Referring to Bladt \& Furrer \cite{bladt2023conditional}, we get almost sure uniform convergence of the estimators for the univariate conditional transition rates. If $s$ and $\tau$ are selected, so that $\tau>s$, $\mathbb{P}( \tau\leq R)>0$, $ i, j\in\mathcal{Z}$ and Assumptions \ref{as:statistical assumptions} are met, we get
\begin{align}
	\begin{split}\label{eq: Konvergenzen in 1-D}
			&\sup_{t\in(s,\tau]}\left|I_{z,j}^{(n)}(t)-P_{z,j}^c(t)\right|\xrightarrow[n\rightarrow\infty]{a.s.}0\\
	&\sup_{t\in(s,\tau]}\left|N_{z,jk}^{(n)}(t)-Q_{z,jk}^c(t)\right|\xrightarrow[n\rightarrow\infty]{a.s.}0,\\
	&\sup_{t\in(s,\tau]}\left|P_{z,j}^{(n,\epsilon)}(t)-P^{(\epsilon)}_{z,j}(t)\right|\xrightarrow[n\rightarrow\infty]{a.s.}0,\\
	&\sup_{t\in(s,\tau]}\left|\Lambda_{z,jk}^{(n,\epsilon)}(t)-\Lambda_{z,jk}^{(\epsilon)}(t)\right|\xrightarrow[n\rightarrow\infty]{a.s.}0.
	\end{split}
\end{align} The idea of the estimation procedure is to estimate censored conditional transition rates and censored transition probabilities, use these to estimate conditional transition rates because they do not depend on uncensored data, and finally to use these uncensored conditional transition rates to estimate the uncensored transition probabilities via the product integral.
 \begin{theorem}\label{theo:convergence of Lambda and P 1D}
	If the Assumption \ref{as:statistical assumptions} holds, we get uniform convergence of the estimators. For $i,j\in\mathcal{Z}$
	\begin{align*}&\lim_{\epsilon\rightarrow 0}\lim_{n\rightarrow\infty}\sup_{t\in(s,\tau]}\left|\Lambda_{z,jk}^{(n,\epsilon)}(t)-\Lambda_{z,jk}(t)\right|\stackrel{a.s}{=}0,\\
		&\lim_{\epsilon\rightarrow 0}\lim_{n\rightarrow\infty}\sup_{t\in(s,\tau]}\left|P_{z,j}^{(n,\epsilon)}(t)-P_{z,j}(t)\right|\stackrel{a.s}{=}0.
	\end{align*}
\end{theorem}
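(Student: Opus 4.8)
The plan is to decouple the two limits. The inner limit $n\to\infty$ is already delivered by the convergences \eqref{eq: Konvergenzen in 1-D} imported from Bladt \& Furrer \cite{bladt2023conditional}, so the genuine content of the theorem is the deterministic outer limit $\epsilon\downarrow 0$. First I would fix a realisation in the almost sure event on which all four statements of \eqref{eq: Konvergenzen in 1-D} hold. On this event, for each fixed $\epsilon>0$ the reverse triangle inequality for the supremum norm, together with $\sup_{t\in(s,\tau]}|\Lambda_{z,jk}^{(n,\epsilon)}(t)-\Lambda_{z,jk}^{(\epsilon)}(t)|\to 0$ and $\sup_{t\in(s,\tau]}|P_{z,j}^{(n,\epsilon)}(t)-P_{z,j}^{(\epsilon)}(t)|\to 0$ as $n\to\infty$, yields
\[
\lim_{n\to\infty}\sup_{t\in(s,\tau]}|\Lambda_{z,jk}^{(n,\epsilon)}(t)-\Lambda_{z,jk}(t)| = \sup_{t\in(s,\tau]}|\Lambda_{z,jk}^{(\epsilon)}(t)-\Lambda_{z,jk}(t)|,
\]
and analogously for the transition probabilities. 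Hence it suffices to show $\sup_{t\in(s,\tau]}|\Lambda_{z,jk}^{(\epsilon)}(t)-\Lambda_{z,jk}(t)|\to 0$ and $\sup_{t\in(s,\tau]}|P_{z,j}^{(\epsilon)}(t)-P_{z,j}(t)|\to 0$ as $\epsilon\downarrow 0$.

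For the rates I would use that $\Lambda_{z,jk}=\Lambda_{z,jk}^c$ on $(s,\tau]$ (valid since $\mathbb{P}(\tau\le R)>0$) and compare the integrands $(P_{z,j}^c(u^-)\vee\epsilon)^{-1}$ and $\mathds{1}_{\{P_{z,j}^c(u^-)>0\}}P_{z,j}^c(u^-)^{-1}$ against the common integrator $Q_{z,jk}^c(\d u)$. They coincide on $\{P_{z,j}^c(u^-)\ge\epsilon\}$; on $\{0<P_{z,j}^c(u^-)<\epsilon\}$ their difference is at most $P_{z,j}^c(u^-)^{-1}$ in modulus; and on $\{P_{z,j}^c(u^-)=0\}$ the perturbed integrand equals $\epsilon^{-1}$. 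The decisive point is therefore that $Q_{z,jk}^c$ assigns no mass to $\{u:P_{z,j}^c(u^-)=0\}$: a transition $j\to k$ at time $u$ forces $Z(u^-)=j$, so $Q_{z,jk}^c(\d u)$ is carried by $\{P_{z,j}^c(u^-)>0\}$ (the argument being the univariate counterpart of the computation in the proof of Lemma \ref{lem:change between censored and non-censored data}); were this to fail, $\Lambda_{z,jk}^{(\epsilon)}$ would differ from $\Lambda_{z,jk}$ by a term of order $\epsilon^{-1}$. With this, $\sup_{t\in(s,\tau]}|\Lambda_{z,jk}^{(\epsilon)}(t)-\Lambda_{z,jk}(t)|$ is bounded by $\int_{(s,\tau]}\mathds{1}_{\{0<P_{z,j}^c(u^-)<\epsilon\}}P_{z,j}^c(u^-)^{-1}\,Q_{z,jk}^c(\d u)$, which decreases to $0$ as $\epsilon\downarrow 0$ by monotone (equivalently dominated) convergence, the dominating integrand $\mathds{1}_{\{P_{z,j}^c(u^-)>0\}}P_{z,j}^c(u^-)^{-1}$ being $Q_{z,jk}^c$-integrable because $\Lambda_{z,jk}^c(\tau)=\Lambda_{z,jk}(\tau)<\infty$ by \eqref{IntegrabilCond}.

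For the transition probabilities, the same mass restriction gives $\Lambda_{z,jk}^{(\epsilon)}(\tau)\le\Lambda_{z,jk}^c(\tau)=\Lambda_{z,jk}(\tau)<\infty$ uniformly in $\epsilon$, so the matrix-valued integrators $Id+\Lambda_z^{(\epsilon)}$ have total variation on $(s,\tau]$ bounded uniformly in $\epsilon$ and converge uniformly on $(s,\tau]$ to $Id+\Lambda_z$ by the previous step. Since $\mathcal{Z}$ is finite and $P_z^{(\epsilon)}(s)=P_z(s)$, the Duhamel-equation estimate for product integrals — local Lipschitz continuity of $\Prodi_{(s,\cdot]}(Id+\d\mu)$ in $\mu$ under a uniform total-variation bound, exactly as in the consistency proof of the Aalen-Johansen estimator in Andersen, Borgan, Gill \& Keiding \cite{andersen2012statistical} — yields $\sup_{t\in(s,\tau]}|P_{z,j}^{(\epsilon)}(t)-P_{z,j}(t)|\to 0$ as $\epsilon\downarrow 0$ for every $j\in\mathcal{Z}$. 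Together with the first paragraph this proves both displays.

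The two steps I expect to require real work, rather than bookkeeping, are (i) establishing that $Q_{z,jk}^c$ ignores $\{u:P_{z,j}^c(u^-)=0\}$ — without it the perturbed rate contributes an $\epsilon^{-1}$ blow-up and nothing converges — and (ii) producing the $\epsilon$-uniform total-variation bound on $\Lambda_z^{(\epsilon)}$ and feeding it into the Duhamel estimate to pass from convergence of the rates to convergence of the transition probabilities; everything else reduces to dominated convergence and triangle inequalities on the almost sure event carried over from \eqref{eq: Konvergenzen in 1-D}.
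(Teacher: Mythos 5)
Your proof is correct and follows the same overall architecture the paper sketches (decouple the two limits, import the $n\to\infty$ step from \eqref{eq: Konvergenzen in 1-D}, handle $\epsilon\downarrow 0$ by monotone convergence on the rates and a Duhamel estimate on the product integral for the probabilities), but you streamline two points. For the rates, the paper's route is pointwise convergence by monotone convergence followed by a Glivenko--Cantelli-type argument to upgrade to uniform convergence; you instead observe that $\Lambda_{z,jk}(t)-\Lambda_{z,jk}^{(\epsilon)}(t)$ is nondecreasing in $t$ (because the integrand difference is nonnegative on the support of $Q^c_{z,jk}$), so the supremum is attained at $t=\tau$ and a single application of monotone convergence on $\int_{(s,\tau]}\mathds{1}_{\{0<P^c_{z,j}(u^-)<\epsilon\}}P^c_{z,j}(u^-)^{-1}Q^c_{z,jk}(\d u)$ already gives the uniform statement — a cleaner step that avoids the extra Glivenko--Cantelli bookkeeping. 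Second, you make explicit the crucial fact that $Q^c_{z,jk}$ carries no mass on $\{u:P^c_{z,j}(u^-)=0\}$ (via the c\`adl\`ag/left-limit argument that a $j\to k$ jump at $u$ forces $Z(u^-)=j$ and hence $P^c_{z,j}(u^-)>0$), whereas the paper delegates this to an external citation; this is the point without which $\Lambda^{(\epsilon)}$ would not converge, and spelling it out is a genuine improvement. The transition-probability step is the same in both: a uniform-in-$\epsilon$ variation bound on $\Lambda^{(\epsilon)}_z$ fed into the Duhamel/Lipschitz estimate for the product integral.
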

The proof for this theorem follows at the end of this section. 
\\
The setup we use for the estimation of  bivariate conditional transition rates is based on the same idea.
The aim of the following statements is to assert uniform convergence for the bivariate estimators.
\begin{lemma}\label{theo: Konvegenz der erwarteten Sprünge und der Übergangs wahrscheinlichkeiten}
	If $\boldsymbol \tau\in[s,\infty)$ so that $\mathbb{P}(\boldsymbol \tau\leq R)>0$ and  Assumption \ref{as:statistical assumptions} holds, we get uniform convergence of the estimators $\boldsymbol N_{z,ijkl}^{(n)}$ and $ \boldsymbol I_{z,ij}^{(n)}$:
	\begin{align*}
		&\sup_{\boldsymbol t\in(s,\boldsymbol \tau]}\Big| \boldsymbol N_{z,\boldsymbol{ij}}^{(n)}(\boldsymbol t)-\boldsymbol Q_{z,\boldsymbol{ij}}^c(\boldsymbol t)\Big|\xrightarrow[n\rightarrow\infty]{a.s.} 0,\\
		&\sup_{\boldsymbol t\in(s,\boldsymbol \tau]}\Big| \boldsymbol I_{z,\boldsymbol i}^{(n)}(\boldsymbol t)-\boldsymbol P_{z,\boldsymbol i}^{c}(\boldsymbol t)\Big|\xrightarrow[n\rightarrow\infty]{a.s.} 0,
	\end{align*}for $\boldsymbol i,\boldsymbol j\in\mathcal{Z}^2$.
\end{lemma}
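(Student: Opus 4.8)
The plan is to treat $\boldsymbol N_{z,\boldsymbol{ij}}^{(n)}$ and $\boldsymbol I_{z,\boldsymbol i}^{(n)}$ as empirical averages of i.i.d.\ random surfaces and to deduce the uniform statement from a two-dimensional Glivenko--Cantelli argument, exactly in the spirit of the univariate convergences \eqref{eq: Konvergenzen in 1-D}. First I would reduce to a single index triple: since $\xi$ is discrete and $\mathcal Z$ is finite, there are only finitely many relevant $(z,\boldsymbol i,\boldsymbol j)$, so it suffices to fix them. By Assumption~\ref{as:statistical assumptions}a) the summands $Y^m(\boldsymbol t):=\mathds 1_{\{\xi^m=z\}}N_{i_1j_1}^m(t_1\wedge R^m)N_{i_2j_2}^m(t_2\wedge R^m)$ (respectively the indicator analogue for $\boldsymbol I$) are i.i.d.\ in $m$ with $\E[Y^1(\boldsymbol t)]=\boldsymbol Q_{z,\boldsymbol{ij}}^c(\boldsymbol t)$ (resp.\ $\boldsymbol P_{z,\boldsymbol i}^c(\boldsymbol t)$). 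On the rectangle $(s,\boldsymbol\tau]$ one has the envelope bound $\sup_{\boldsymbol t\le\boldsymbol\tau}|Y^1(\boldsymbol t)|\le N_{i_1j_1}(\tau_1\wedge R)\,N_{i_2j_2}(\tau_2\wedge R)$, whose expectation is finite by Cauchy--Schwarz together with \eqref{eq: finite expected value of N}; for the indicator version the envelope is trivially $1$. Hence, for each fixed $\boldsymbol t$, the strong law of large numbers gives $\boldsymbol N_{z,\boldsymbol{ij}}^{(n)}(\boldsymbol t)\to\boldsymbol Q_{z,\boldsymbol{ij}}^c(\boldsymbol t)$ a.s.\ and likewise for $\boldsymbol I$.

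Next I would upgrade pointwise to uniform convergence by decomposing the random surfaces into a fixed finite number of pieces that are monotone nondecreasing in each coordinate separately (equivalently, $2$-increasing sub-distribution-type surfaces). For $\boldsymbol N_{z,\boldsymbol{ij}}^{(n)}$ this is immediate: each summand is a product $u_1(t_1)\,u_2(t_2)$ with $u_k$ nondecreasing and bounded, hence $2$-increasing and coordinatewise monotone, as is the limit $\boldsymbol Q_{z,\boldsymbol{ij}}^c$. For $\boldsymbol I_{z,\boldsymbol i}^{(n)}$ I would use the representation \eqref{I_represent_by_N} to write $I_{i_k}^m(t_k)=I_{i_k}^m(s)+A_k^{m,+}(t_k)-A_k^{m,-}(t_k)$ with $A_k^{m,\pm}$ nondecreasing (counts of jumps into / out of $i_k$ on $(s,t_k]$, a.s.\ finite since \eqref{eq: finite expected value of N} rules out explosions), and handle the censoring factor via $\mathds 1_{\{t_k<R^m\}}=1-\mathds 1_{\{R^m\le t_k\}}$ with the latter nondecreasing, or by splitting the time rectangle at $R^m$. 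Expanding the product $I_{i_1}^m(t_1)I_{i_2}^m(t_2)\mathds 1_{\{\boldsymbol t<R^m\}}$ then exhibits it as a fixed finite linear combination of coordinatewise-monotone surfaces, so $\boldsymbol I_{z,\boldsymbol i}^{(n)}$ and $\boldsymbol P_{z,\boldsymbol i}^c$ inherit the same decomposition.

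For each monotone piece, with limit surface $G$ (bounded, nondecreasing in each coordinate), I would run the classical sandwich argument: $G$ has at most countably many discontinuity lines $\{t_1=a\}$ and $\{t_2=b\}$, so I choose a countable set $D\subset(s,\boldsymbol\tau]$ that is dense and meets all of these lines cofinally; by the SLLN and countability, a.s.\ the empirical surfaces converge to $G$ simultaneously at every point of $D$ and at the four orthant limits there. Any $\boldsymbol t$ then lies in a grid cell with corners in $D$, and coordinatewise monotonicity plus the $2$-increasing property bound $|G_n(\boldsymbol t)-G(\boldsymbol t)|$ by the oscillation of $G$ over that cell plus the corner errors; refining the grid (legitimate because the jumps of $G$ are summable) drives the oscillation to zero off the jump lines, while the points of $D$ on the jump lines control the rest. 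Recombining the finitely many pieces yields $\sup_{\boldsymbol t\in(s,\boldsymbol\tau]}|\boldsymbol N_{z,\boldsymbol{ij}}^{(n)}(\boldsymbol t)-\boldsymbol Q_{z,\boldsymbol{ij}}^c(\boldsymbol t)|\to0$ a.s.\ and the analogous statement for $\boldsymbol I$. I expect the main obstacle to be organising this squeeze in two dimensions when $G$ is genuinely discontinuous --- making the grid simultaneously capture every jump line of every monotone piece and correctly matching the four orthant limits --- with the "anti-monotone" censoring indicator $\mathds 1_{\{\boldsymbol t<R^m\}}$ a secondary nuisance that the splitting at $R^m$ removes.
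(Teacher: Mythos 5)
Your proposal follows essentially the same route as the paper: strong law of large numbers for pointwise convergence at each $\boldsymbol t$, decomposition of the estimated surfaces into a fixed finite number of coordinatewise-monotone (indeed $2$-increasing) pieces, and a two-dimensional Glivenko--Cantelli sandwich argument. The paper realizes the squeeze with a finite rectangle partition on which the limit measure has small increments (invoking Young \& Young for the fact that a monotone bivariate function's discontinuities lie on countably many axis-parallels), controls eventual boundedness via events $C_{m,n}$, and — for $\boldsymbol I_{z,\boldsymbol i}^{(n)}$ — expresses the censored indicator product through the counting processes $N(\cdot\wedge R)$ plus an explicit correction term $C_{z,\boldsymbol i}$ to which the same squeeze is reapplied; your countable-dense-grid packaging, Cauchy--Schwarz envelope bound, and direct expansion of $I_{i_k}^m(t_k)$ and $\mathds 1_{\{t_k<R^m\}}$ into monotone summands are equivalent in spirit and not a genuinely different argument.
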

The proof for this lemma follows at the end of this section.\\
These convergences can be transferred to the convergence of conditional transition rates and probabilities. Again, the convergence in $\epsilon$ is taken care of.
\begin{theorem}\label{theo: convergence of Lambda and P}If the Assumption \ref{as:statistical assumptions} holds, we get uniform convergence of the estimators. For $\boldsymbol i,\boldsymbol j\in\mathcal{Z}^2$ 
	\begin{align*}&\lim\limits_{\epsilon\rightarrow 0}\lim\limits_{n\rightarrow\infty}\sup_{\boldsymbol t\in(s,\boldsymbol\tau]}\Big|\boldsymbol\Lambda_{z,\boldsymbol{ij}}^{(n,\epsilon)}(\boldsymbol t)-\boldsymbol\Lambda_{z,\boldsymbol{ij}}(\boldsymbol t)\Big|\stackrel{a.s}{=}0,\\
		&\lim\limits_{\epsilon\rightarrow 0}\lim\limits_{n\rightarrow\infty}\sup_{\boldsymbol t\in(s,\boldsymbol\tau]}\Big| \boldsymbol P_z^{(n,\epsilon)}(\boldsymbol t)- \boldsymbol P_z(\boldsymbol t)\Big| \stackrel{a.s}{=}0.
	\end{align*}
\end{theorem}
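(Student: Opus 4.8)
\textbf{The plan} is to mirror the proof of Theorem~\ref{theo:convergence of Lambda and P 1D}: for each fixed perturbation level $\epsilon>0$ I first let $n\to\infty$, obtaining a.s.\ uniform convergence on $(s,\boldsymbol\tau]$ of the empirical objects to their population counterparts $\boldsymbol\Lambda^{(\epsilon)}_{z,\boldsymbol{ij}}$ and $\boldsymbol P^{(\epsilon)}_z$ (the latter being the solution of \eqref{2dimKolmForwEq} driven by $\boldsymbol\Lambda^{(\epsilon)}_z$ with the conditional initial and boundary data), and then let $\epsilon\downarrow0$, using Lemma~\ref{lem:change between censored and non-censored data} to identify the limits as $\boldsymbol\Lambda_{z,\boldsymbol{ij}}$ resp.\ $\boldsymbol P_z$. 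For the first assertion, fix $\epsilon>0$ and write on $(s,\boldsymbol\tau]$
\begin{align*}
\boldsymbol\Lambda^{(n,\epsilon)}_{z,\boldsymbol{ij}}(\boldsymbol t)-\boldsymbol\Lambda^{(\epsilon)}_{z,\boldsymbol{ij}}(\boldsymbol t)
&=\int_{(s,\boldsymbol t]}\Bigl(\tfrac{1}{\boldsymbol I^{(n)}_{z,\boldsymbol i}(\boldsymbol u^-)\vee\epsilon}-\tfrac{1}{\boldsymbol P^{c}_{z,\boldsymbol i}(\boldsymbol u^-)\vee\epsilon}\Bigr)\boldsymbol N^{(n)}_{z,\boldsymbol{ij}}(\d\boldsymbol u)\\
&\quad+\int_{(s,\boldsymbol t]}\frac{1}{\boldsymbol P^{c}_{z,\boldsymbol i}(\boldsymbol u^-)\vee\epsilon}\,\bigl(\boldsymbol N^{(n)}_{z,\boldsymbol{ij}}-\boldsymbol Q^{c}_{z,\boldsymbol{ij}}\bigr)(\d\boldsymbol u).
\end{align*}
Since $|(a\vee\epsilon)^{-1}-(b\vee\epsilon)^{-1}|\le\epsilon^{-2}|a-b|$, the first integral is at most $\epsilon^{-2}\sup_{(s,\boldsymbol\tau]}|\boldsymbol I^{(n)}_{z,\boldsymbol i}-\boldsymbol P^{c}_{z,\boldsymbol i}|$ times the two–dimensional total variation of $\boldsymbol N^{(n)}_{z,\boldsymbol{ij}}$ on $(s,\boldsymbol\tau]$; the former vanishes a.s.\ by Lemma~\ref{theo: Konvegenz der erwarteten Sprünge und der Übergangs wahrscheinlichkeiten}, the latter is dominated by $n^{-1}\sum_m\mathds{1}_{\{\xi^m=z\}}N^m_{i_1j_1}(\tau_1\wedge R^m)N^m_{i_2j_2}(\tau_2\wedge R^m)$, which by the strong law of large numbers and \eqref{eq: finite expected value of N} (with Cauchy–Schwarz) is a.s.\ eventually bounded. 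For the second integral I integrate by parts in two dimensions: the integrand $\boldsymbol u\mapsto(\boldsymbol P^{c}_{z,\boldsymbol i}(\boldsymbol u^-)\vee\epsilon)^{-1}$ is bounded by $\epsilon^{-1}$ and, by \eqref{IntegrabilCond} and the censored analogue of \eqref{2dimKolmForwEq}, has finite two–dimensional (Hardy–Krause) variation, so the integral is at most a finite ($\epsilon$–dependent) constant times $\sup_{(s,\boldsymbol\tau]}|\boldsymbol N^{(n)}_{z,\boldsymbol{ij}}-\boldsymbol Q^{c}_{z,\boldsymbol{ij}}|\xrightarrow{a.s.}0$ by Lemma~\ref{theo: Konvegenz der erwarteten Sprünge und der Übergangs wahrscheinlichkeiten}. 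Hence $\sup_{(s,\boldsymbol\tau]}|\boldsymbol\Lambda^{(n,\epsilon)}_{z,\boldsymbol{ij}}-\boldsymbol\Lambda^{(\epsilon)}_{z,\boldsymbol{ij}}|\to0$ a.s.\ for every fixed $\epsilon$.

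Next I pass $\epsilon\downarrow0$. By Lemma~\ref{lem:change between censored and non-censored data}, $\boldsymbol\Lambda_{z,\boldsymbol{ij}}=\boldsymbol\Lambda^{c}_{z,\boldsymbol{ij}}$ on $(s,\boldsymbol\tau]$. The decisive observation is that $\boldsymbol Q^{c}_{z,\boldsymbol{ij}}$ charges no point $\boldsymbol u$ with $\boldsymbol P^{c}_{z,\boldsymbol i}(\boldsymbol u^-)=0$, because a jump of $N^m_{i_1j_1}$ at $u_1$ (resp.\ of $N^m_{i_2j_2}$ at $u_2$) forces $Z^m(u_1^-)=i_1$ (resp.\ $Z^m(u_2^-)=i_2$); consequently, on the support of $|\boldsymbol Q^{c}_{z,\boldsymbol{ij}}|$ one has $0\le(\boldsymbol P^{c}_{z,\boldsymbol i}(\boldsymbol u^-)\vee\epsilon)^{-1}\uparrow(\boldsymbol P^{c}_{z,\boldsymbol i}(\boldsymbol u^-))^{-1}$ as $\epsilon\downarrow0$, and this limit is $|\boldsymbol Q^{c}_{z,\boldsymbol{ij}}|$–integrable on $(s,\boldsymbol\tau]$ by \eqref{IntegrabilCond} (diagonal indices being handled by splitting $N_{ii}$ into finitely many nonnegative pieces). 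Dominated convergence gives $\boldsymbol\Lambda^{(\epsilon)}_{z,\boldsymbol{ij}}(\boldsymbol t)\to\boldsymbol\Lambda_{z,\boldsymbol{ij}}(\boldsymbol t)$ for every $\boldsymbol t$, and monotonicity in $\epsilon$ together with right–continuity upgrades this to uniform convergence on $(s,\boldsymbol\tau]$ via a two–dimensional Dini argument. Combined with the previous step this proves the first assertion.

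For the transition probabilities, recall that $\boldsymbol P^{(n,\epsilon)}_{z,\boldsymbol i}$ is the unique solution of \eqref{eq: zwei-dimensionale Kolmogorov Gleichung}, which has the shape of \eqref{2dimKolmForwEq} with inhomogeneity assembled from $\boldsymbol I^{(n)}_{z,\boldsymbol i}(s)$ and the univariate estimators and with driving measure $\boldsymbol\Lambda^{(n,\epsilon)}_{z,\boldsymbol{ji}}$. As $n\to\infty$ this inhomogeneity converges a.s.\ uniformly on $(s,\boldsymbol\tau]$ (the univariate parts by Theorem~\ref{theo:convergence of Lambda and P 1D} and \eqref{eq: Konvergenzen in 1-D}, the initial terms by the strong law of large numbers, reconciling the $n^{-1}\sum_m\mathds{1}_{\{\xi^m=z\}}\to\mathbb{P}(\xi=z)$ normalisation exactly as in the univariate construction), $\boldsymbol\Lambda^{(n,\epsilon)}_{z,\boldsymbol{ji}}$ converges by the first step, and, from the integral equation together with the bounds already obtained, the two–dimensional variations of $\boldsymbol\Lambda^{(n,\epsilon)}_{z,\boldsymbol{ji}}$ and $\boldsymbol P^{(n,\epsilon)}_{z,\boldsymbol j}$ stay a.s.\ bounded. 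Feeding this into a continuous–dependence estimate for the bivariate Volterra equation — obtained, as in the proof of Theorem~4.1 of Bathke \& Christiansen~\cite{bathke2022two}, by a Grönwall/Peano–series iteration in which $\boldsymbol P_{z,\boldsymbol j}(\boldsymbol u^-)\boldsymbol\Lambda_{z,\boldsymbol{ji}}(\d\boldsymbol u)-\boldsymbol P^{(n,\epsilon)}_{z,\boldsymbol j}(\boldsymbol u^-)\boldsymbol\Lambda^{(n,\epsilon)}_{z,\boldsymbol{ji}}(\d\boldsymbol u)$ is split into a part in $\boldsymbol P_{z,\boldsymbol j}-\boldsymbol P^{(n,\epsilon)}_{z,\boldsymbol j}$ absorbed by the iteration and a part in $\boldsymbol\Lambda_{z,\boldsymbol{ji}}-\boldsymbol\Lambda^{(n,\epsilon)}_{z,\boldsymbol{ji}}$ treated by two–dimensional integration by parts — yields $\sup_{(s,\boldsymbol\tau]}|\boldsymbol P^{(n,\epsilon)}_z-\boldsymbol P^{(\epsilon)}_z|\to0$ a.s.\ for fixed $\epsilon$. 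The same estimate, now combined with the second step and the $\epsilon$–uniform bound $\int_{(s,\boldsymbol\tau]}(\boldsymbol P^{c}_{z,\boldsymbol i}(\boldsymbol u^-))^{-1}|\boldsymbol Q^{c}_{z,\boldsymbol{ij}}|(\d\boldsymbol u)<\infty$ on the variation of $\boldsymbol\Lambda^{(\epsilon)}_{z,\boldsymbol{ij}}$, gives $\sup_{(s,\boldsymbol\tau]}|\boldsymbol P^{(\epsilon)}_z-\boldsymbol P_z|\to0$ as $\epsilon\downarrow0$; the two iterated limits combine to the second assertion.

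\textbf{Main obstacle.} The delicate point is the second integral in the first step (and its analogue for the probabilities): transferring uniform convergence of the \emph{integrators} $\boldsymbol N^{(n)}_{z,\boldsymbol{ij}}$ to convergence of the Lebesgue–Stieltjes integrals needs an a.s.\ uniform-in-$n$ control of the two–dimensional Hardy–Krause variation of the perturbed integrand $(\boldsymbol P^{c}_{z,\boldsymbol i}(\boldsymbol u^-)\vee\epsilon)^{-1}$ and of $\boldsymbol P^{(n,\epsilon)}_{z,\boldsymbol j}$; unlike in the bivariate survival-function setting there is no monotonicity to exploit, so these bounds must be extracted from the Kolmogorov equation \eqref{2dimKolmForwEq} and the moment assumption \eqref{eq: finite expected value of N} via the strong law. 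A second, more clerical, difficulty is keeping track of the $\mathbb{P}(\xi=z)$ normalisation distinguishing the empirical objects $\boldsymbol I^{(n)},\boldsymbol N^{(n)}$ from the conditional objects $\boldsymbol P^{c},\boldsymbol Q^{c}$.
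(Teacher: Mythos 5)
Your proposal is correct and follows essentially the same strategy as the paper: iterate the limits $n\to\infty$ then $\epsilon\downarrow 0$; handle the $n$-limit for $\boldsymbol\Lambda$ by transferring uniform convergence of $\boldsymbol I^{(n)}_{z,\boldsymbol i}$ and $\boldsymbol N^{(n)}_{z,\boldsymbol{ij}}$ through the integral functional under a.s.\ bounded two-dimensional variation; recast \eqref{eq: zwei-dimensionale Kolmogorov Gleichung} as a bivariate inhomogeneous Volterra equation and obtain continuous dependence via the Peano-series solution, Duhamel-type splitting, and two-dimensional integration by parts; and handle the $\epsilon$-limit by monotone convergence followed by a pointwise-to-uniform upgrade exploiting monotonicity in $\boldsymbol t$. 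The one presentational difference is that where you establish continuity of $(F,G)\mapsto\int(F\vee\epsilon)^{-1}\,\d G$ by an explicit two-term decomposition with the Lipschitz bound $|(a\vee\epsilon)^{-1}-(b\vee\epsilon)^{-1}|\le\epsilon^{-2}|a-b|$, the paper invokes Lemma~\ref{lem:hadamard diff} (Hadamard differentiability) and the variation inequality $\Vert(f\vee\epsilon)^{-1}\Vert_{v_1}^{(2)}\le\epsilon^{-1}+\epsilon^{-4}\Vert f\Vert_{v_1}^{(2)}$ together with \eqref{I_represent_by_N} — same content, packaged differently. One small caution on terminology: in the $\epsilon$-step the limit $\boldsymbol\Lambda_{z,\boldsymbol{ij}}$ is only c\`{a}dl\`{a}g, so ``Dini'' in the strict sense does not apply; what you (and the paper) actually use is the Glivenko--Cantelli/P\'olya-type upgrade for monotone approximants of a monotone (possibly discontinuous) limit on a finite grid, which is the argument already deployed in the proof of Lemma~\ref{theo: Konvegenz der erwarteten Sprünge und der Übergangs wahrscheinlichkeiten} — the substance is right, only the name is off.
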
The proof for this theorem follows at the end of this section.\\
This theorem somewhat describes consistency of the estimators $\boldsymbol\Lambda^{(n,\epsilon)}_{z,\boldsymbol{ij}}$ and $\boldsymbol P_z^{(n,\epsilon)}$. In practice one can fix an $\epsilon$ and let $n$ grow to get a reasonably close estimation of $\boldsymbol\Lambda_{z,\boldsymbol{ij}}$ and $\boldsymbol P_z$.
\textcolor{black}{
In the proof of this theorem we show, among other things, that the solution of the equation \eqref{eq: volterra equation 2} as a functional of $\boldsymbol \Lambda_{z,\boldsymbol i \boldsymbol j}$ and $ \Lambda_{z, i  j}$ is Hadamard continuous. This statement is already known, see Gill, van der Laan \& Wellner \cite{gill1995inefficient}. They did not add a complete proof, so we include this part in our proof. Additionally, they state that this functional is Hadamard differentiable.
}
\begin{remark}
	For the convergence of these landmark estimators we use $\boldsymbol\tau$, meaning $\mathbb{P}(\boldsymbol\tau\leq R)$. In a basic multi-state model with one state process, this can be simplified to $\boldsymbol\tau=(\tau,\tau)$ with $\mathbb{P}(\tau\leq R)$. This is because both jump processes or indicator processes come from the same state-process and therefore generally fall under the same censoring time. We have introduced this freedom of the censoring variable to allow for natural generalization to two jump processes corresponding to two different subjects of examination.
\end{remark}
\begin{remark}
	In the context of actuarial estimation, so called retrospective estimation is important because past developments can be non-measurable regarding the current information $\sigma(\mathcal{G}_s)$, see section \ref{Chap:2}. The definition of the retrospective estimators and the proof of convergence works similar to the prospective estimations, the main difference is the consideration of left truncation. For these reasons, it is out of the scope of this paper.
\end{remark}
\begin{remark}
	\textcolor{black}{
		We introduce the idea that one would follow to prove asymptotic normality, i.e.\ weak convergence to a tight Gaussian process of the introduced estimators. To prove the asymptotic normality of $\boldsymbol\Lambda_{z,\boldsymbol{ij}}^{(n,\epsilon)}(\boldsymbol t)$ and $\boldsymbol{P}_{z}^{(n,\epsilon)}(\boldsymbol t)$  one uses the Hadamard differentiability of the equation \eqref{eq: zwei-dimensionale Kolmogorov Gleichung} interpreted as a functional and ideas introduced in the proof of Theorem 4.5 in Bladt and Furrer \cite{bladt2023conditional}, which are easily generalised to two dimensions, for the demonstration of the asymptotic normality of $\boldsymbol\Lambda_{z,\boldsymbol{ij}}^{(n,\epsilon)}(\boldsymbol t)$.
		Both of these ideas are based on the asymptotic normality of  $\boldsymbol N^{(n)}_{z,\boldsymbol{ij}}(\boldsymbol t)$ and $\boldsymbol{I}_{z,\boldsymbol i}^{(n)}(\boldsymbol t)$, where the second also bases on the first with the help of equation \eqref{I_represent_by_N}. The asymptotic normality of $\boldsymbol N^{(n)}_{z,\boldsymbol{ij}}(\boldsymbol t)$ follows with the bracketing central limit theorem, Theorem 2.11.9 in \cite{VanderLaan1996}. The convergence via the bracketing central limit theorem relies on the bracketing number, which is the minimum number of sets $P_i$ in the partition of $[s,\tau]^2$ such that \begin{align*}
			\mathbb{E}[\sup_{\boldsymbol{t},\boldsymbol{s}\in P_i}|N_{j_1k_1}(t_1\wedge R)N_{j_2k_2}(t_2\wedge R)-N_{j_1k_1}(s_1\wedge R)N_{j_2k_2}(s_1\wedge R)|^2]<\mu^2
		\end{align*}for $\mu$ close to zero. This can be reduced to a problem of one-dimensional partitions, by applying the equality $ab-dc=a(b-c)+c(a-d)$, which allows us to work with differences in one dimension rather than two. This reduces the problem of the bracketing number significantly, since the jump processes are all monotonically increasing and bounded in expectation. From here, one uses the classical arguments for one-dimensional jump processes to follow the asymptotic normality. This conveys the main idea needed for the asymptotic normality of conditional bivariate transition rates and transition probabilities. Since this paper is more concerned with the statistical application of these estimators, a detailed proof is beyond the scope of this paper. }
\end{remark}

\begin{proof}[Proof of Lemma \ref{theo: Konvegenz der erwarteten Sprünge und der Übergangs wahrscheinlichkeiten}]
	From the strong law of large numbers, we immediately get 
	\begin{align}\begin{split}
			&\Big| \boldsymbol N_{z,\boldsymbol{ij}}^{(n)}(\boldsymbol t)-\boldsymbol Q_{z,\boldsymbol{ij}}^c(\boldsymbol t)\Big|\xrightarrow[n\rightarrow\infty]{a.s.} 0,\\
			&\Big| \boldsymbol I_{z,\boldsymbol i}^{(n)}(\boldsymbol t)-\boldsymbol P_{z,\boldsymbol i}^{c}(\boldsymbol t)\Big|\xrightarrow[n\rightarrow\infty]{a.s.} 0\label{eq:application of the law of large numbers},
		\end{split}
	\end{align} for all $\boldsymbol t\in(s,\boldsymbol \tau]$.
	The first step is to generalize this to uniform convergence for $\boldsymbol N_{z,\boldsymbol{ij}}^{(n)}(\boldsymbol t)$. We use an idea that is similar to the idea of the univariate Glivenko-Cantelli theorem. We define the sets$$C_{m,n}:=\left\{\sum_{\boldsymbol i,\boldsymbol j\in\mathcal{Z}^2}|\boldsymbol N_{z,\boldsymbol{ij}}^{(n)}(\boldsymbol \tau)|\leq m\right\}$$ and $C_m:=(C_{m,n} \text{ eventually } )$. This means that $\mathds{1}_{C_m}\boldsymbol N^{(n)}_{z,\boldsymbol{ij}}(\boldsymbol t)$ is eventually bounded on $(s,\boldsymbol \tau]$. Now because of equation \eqref{eq:application of the law of large numbers} we have $\mathbb{P}(C_m)=1$. Thus, we can assume $\boldsymbol N^{(n)}_{z,\boldsymbol{ij}}(t)$ to be bounded eventually for all $\boldsymbol{i}\in\mathcal{Z}^2$.\\
	We now take a partition of $(s,\boldsymbol\tau]$ into finitely many pairwise disjoint rectangles $R_i=(\boldsymbol u_i,\boldsymbol t_i)\subset(s,\boldsymbol \tau]$ and $[s,\tau]=\cup_{i\in I}\overline{R}_i$. With $\d \boldsymbol Q_{z,\boldsymbol{ij}}^c(R_i)\leq \epsilon$ and $\d \boldsymbol Q_{z,\boldsymbol{ij}}^c(\Delta R_i)\leq \epsilon$, where $\d Q_{z,ijkl}^c$ is the one or two-dimensional Lebesgue-Stieltjes measure generated by $Q_{z,ijkl}^c$. This works because for a monotonous function in two-variables, discontinuities can only appear on a countable set of parallels to the $x$- and $y$-axis, see for example the work of Young \& Young \cite{young1924discontinuties}. Thus, for every $\boldsymbol x\in(s,\boldsymbol \tau]$, we get an $a\in I$ with $\boldsymbol x\in R_a=(u_a,t_a)$ or $\boldsymbol x$ is on the edge of one of these rectangles. In the first case, we get \begin{align*}
		\boldsymbol N^{(n)}_{z,\boldsymbol{ij}}(\boldsymbol u_a)\leq \boldsymbol N^{(n)}_{z,\boldsymbol{ij}}(\boldsymbol x)\leq \boldsymbol N^{(n)}_{z,\boldsymbol{ij}}(\boldsymbol t_a^-)\\
		\boldsymbol Q_{z,\boldsymbol{ij}}^{c}(\boldsymbol u_a)\leq \boldsymbol Q_{z,\boldsymbol{ij}}^{c}(\boldsymbol x) \leq \boldsymbol Q_{z,\boldsymbol{ij}}^{c}(\boldsymbol t_a^-)
	\end{align*} 
	and \begin{align*}
		\boldsymbol N^{(n)}_{z,\boldsymbol{ij}}(\boldsymbol u_a)-\boldsymbol Q_{z,\boldsymbol{ij}}^{c}(\boldsymbol t_a^-)\leq \boldsymbol N^{(n)}_{z,\boldsymbol{ij}}(\boldsymbol x)-\boldsymbol Q_{z,\boldsymbol{ij}}^{c}(\boldsymbol x)\leq \boldsymbol N^{(n)}_{z,\boldsymbol{ij}}(\boldsymbol t_a^-)-\boldsymbol Q_{z,\boldsymbol{ij}}^{c}(\boldsymbol u_a).
	\end{align*}Adding $0$ we get \begin{align*}
		\boldsymbol N^{(n)}_{z,\boldsymbol{ij}}(\boldsymbol u_a)-\boldsymbol Q_{z,\boldsymbol{ij}}^{c}(\boldsymbol u_a)+\boldsymbol Q_{z,\boldsymbol{ij}}^{c}(\boldsymbol u_a)-\boldsymbol Q_{z,\boldsymbol{ij}}^{c}(\boldsymbol t_a^-)\leq \boldsymbol N^{(n)}_{z,\boldsymbol{ij}}(\boldsymbol x)-\boldsymbol Q_{z,\boldsymbol{ij}}^{c}(\boldsymbol x)\\
		\boldsymbol N^{(n)}_{z,\boldsymbol{ij}}(\boldsymbol t_a^-)-\boldsymbol Q_{z,\boldsymbol{ij}}^{c}(\boldsymbol t_a^-)+\boldsymbol Q_{z,\boldsymbol{ij}}^{c}(\boldsymbol t_a^-)-\boldsymbol Q_{z,\boldsymbol{ij}}^{c}(\boldsymbol u_a)\geq \boldsymbol N^{(n)}_{z,\boldsymbol{ij}}(\boldsymbol x)-\boldsymbol Q_{z,\boldsymbol{ij}}^{c}(\boldsymbol x).
	\end{align*}The last step is, to evaluate $\boldsymbol Q_{z,\boldsymbol{ij}}^{c}(\boldsymbol t_a^-)-\boldsymbol Q_{z,\boldsymbol{ij}}^{c}(s)$ on every rectangle $R_i$. We get\begin{align*}
		\Big|\boldsymbol Q_{z,\boldsymbol{ij}}^{c}(t_{a,1}^-,t_{a,2}^-)&-\boldsymbol Q_{z,\boldsymbol{ij}}^{c}(u_{a,1},u_{a,2})\Big|\\
		&\leq \Big|\boldsymbol Q_{z,\boldsymbol{ij}}^{c}(u_{a,1},t_{a,2}^-)-\boldsymbol Q_{z,\boldsymbol{ij}}^{c}(u_{a,1},u_{a,2})\Big|\\
		&\quad+ \Big|\boldsymbol Q_{z,\boldsymbol{ij}}^{c}(t_{a,1}^-,u_{a,2})-\boldsymbol Q_{z,\boldsymbol{ij}}^{c}(u_{a,1},u_{a,2})\Big|\\
		&\quad+\Big|\boldsymbol Q_{z,\boldsymbol{ij}}^{c}(\boldsymbol t_a^-)-\boldsymbol Q_{z,\boldsymbol{ij}}^{c}(t_{a,1}^-,u_{a,2})-\boldsymbol Q_{z,\boldsymbol{ij}}^{c}(u_{a,1},t_{a,2}^-)+\boldsymbol Q_{z,\boldsymbol{ij}}^{c}(\boldsymbol u_a)\Big|\\
		&\leq 3\epsilon.
	\end{align*}All in all we get the following assertion \begin{align*}
		\boldsymbol N^{(n)}_{z,\boldsymbol{ij}}(\boldsymbol u_a)-\boldsymbol Q_{z,\boldsymbol{ij}}^{c}(\boldsymbol u_a)-3\epsilon\leq \boldsymbol N^{(n)}_{z,\boldsymbol{ij}}(\boldsymbol x)-\boldsymbol Q_{z,\boldsymbol{ij}}^{c}(\boldsymbol x)\\
		\boldsymbol N^{(n)}_{z,\boldsymbol{ij}}(\boldsymbol t_a^-)-\boldsymbol Q_{z,\boldsymbol{ij}}^{c}(\boldsymbol t_a^-)+3\epsilon\geq \boldsymbol N^{(n)}_{z,\boldsymbol{ij}}(\boldsymbol x)-\boldsymbol Q_{z,\boldsymbol{ij}}^{c}(\boldsymbol x).
	\end{align*}Now $\forall \epsilon>0$ take the previously defined partition $(R_i)_{i\in\mathcal{Z}}$ and define $N(\epsilon)$ such that $\boldsymbol N^{(n)}_{z,\boldsymbol{ij}}(\boldsymbol u_i)-\boldsymbol Q_{z,\boldsymbol{ij}}^{c}(\boldsymbol u_i)>-\epsilon$ and $\boldsymbol N^{(n)}_{z,\boldsymbol{ij}}(\boldsymbol t_i-)-\boldsymbol Q_{z,\boldsymbol{ij}}^{c}(\boldsymbol t_i-)<\epsilon$ for all $R_i$. This is possible due to the finite nature of the partition and \eqref{eq:application of the law of large numbers}. Then we get $\boldsymbol N^{(n)}_{z,\boldsymbol{ij}}(\boldsymbol x)-\boldsymbol Q_{z,\boldsymbol{ij}}^{c}(\boldsymbol x)<4\epsilon$.
	For the second case, where $\boldsymbol x$ is on the edge of one of the rectangles, we can apply the one-dimensional Glivenko-Cantelli idea on a parallel to the $x$- or $y$-axis and get to the same outcome. 
	Thus, $$\sup_{\boldsymbol x\in[s,\boldsymbol \tau]}\Big|\boldsymbol N^{(n)}_{z,\boldsymbol{ij}}(\boldsymbol x)-\boldsymbol Q_{z,\boldsymbol{ij}}^{c}(\boldsymbol x)\Big|\xrightarrow[n\rightarrow\infty]{}0$$
	on $C_{m}$. Referring to the fact that $\boldsymbol Q_{z,\boldsymbol{ij}}^c(\tau)$ is bounded and that $\boldsymbol N_{z,\boldsymbol{ij}}^{(n)}(\boldsymbol\tau)\xrightarrow[n\rightarrow\infty]{a.s.}\boldsymbol Q_{z,\boldsymbol{ij}}^c(\boldsymbol \tau)$, $\mathbb{P}(C_m)=1$ for $m$ large enough. All in all, we get the almost sure uniform convergence of $\boldsymbol N_{z,\boldsymbol{ij}}^{(n)}(\boldsymbol t)$.\\
	For the uniform convergence of $\boldsymbol I_{z,\boldsymbol i}^{(n)}(\boldsymbol t)$ we use the fact that we can calculate the state-indicator process from the full information over all the jump processes and the information at the current time $s$ and then transfer the uniform convergence of the conditional transition rates to the transition probabilities. This is a common idea for the convergence of transition probabilities, see for example the work of Bladt \& Furrer \cite{bladt2023conditional}. In our case we have the formula \begin{align*}
		I_{i_1}(t_1)I_{i_2}(t_2)&=I_{i_1}(s)I_{i_2}(s)+I_{i_2}(s)\sum_{\substack{k\in\mathcal{Z}}}N_{ki_1}(t_1)+I_{i_1}(s)\sum_{\substack{k\in\mathcal{Z}}}N_{ki_2}(t_2)\\
		&\quad+\sum_{\substack{\boldsymbol k\in\mathcal{Z}^2}}N_{k_1i_1}(t_1)N_{k_2i_2}(t_2).
	\end{align*}Because of the censoring we need to adjust this formula to \begin{align*}
		I_{i_1}(t_1)I_{i_2}(t_2)\mathds{1}_{\{\boldsymbol t<R\}}&=I_{i_1}(s)I_{i_2}(s)+I_{i_2}(s)\sum_{\substack{k\in\mathcal{Z}}}N_{ki_1}(t_1\wedge R)\\
		&\quad +I_{i_1}(s)\sum_{\substack{k\in\mathcal{Z}}}N_{ki_2}(t_2\wedge R)
		+\sum_{\substack{\boldsymbol k\in\mathcal{Z}^2}}N_{k_1i_1}(t_1\wedge R)N_{k_2i_2}(t_2\wedge R)\\
		&\quad-\mathds{1}_{\{t_1\geq R\vee t_2\geq R\}}I_{i_1}(t_1\wedge R)I_{i_2}(t_2\wedge R\texttt{}).
	\end{align*}This means that the estimator $I_{z,ij}^{(n)}(\boldsymbol t)$ satisfies the following equation: \begin{align}
		\begin{split}
			\boldsymbol I_{z,\boldsymbol i}^{(n)}(\boldsymbol t)&=I_{z,\boldsymbol i}^{(n)}(s)+\boldsymbol I_{z,i_1}^{(n)}(s)\sum_{\substack{k\in\mathcal{Z}}}{N}^{(n)}_{z,ki_2}(t_2)+I_{z,i_2}^{(n)}(s)\sum_{\substack{k\in\mathcal{Z}}}{N}^{(n)}_{z,ki_1}(t_1)\\
			&\quad+ \sum_{\substack{\boldsymbol k\in\mathcal{Z}^2}}\boldsymbol N^{(n)}_{z,\boldsymbol{ki}}(\boldsymbol t)- C_{z,\boldsymbol i}^{(n)}(\boldsymbol t),
		\end{split}\label{eq:Zerlegung des Schätzers Für die Übergangswahrscheinlichkeiten}
	\end{align} where \begin{align*} C_{z,\boldsymbol i}^{(n)}(\boldsymbol t):=\frac{1}{n}\sum_{m=1}^{n}\mathds{1}_{\{R^m\leq t_1\vee R^m\leq t_2\}}\mathds{1}_{\{Z^m_{t_1\wedge R^m}=i_1,Z^m_{t_2\wedge R^m}=i_2\}}\mathds{1}_{\{\xi^m=z\}},\end{align*} for $\boldsymbol i\in\mathcal{Z}^2$.
	Analogously, we get an equality for the occupation probabilities: \begin{align}
		\begin{split}
			\boldsymbol P_{z,\boldsymbol i}^{c}(\boldsymbol t)&=\boldsymbol P_{z,\boldsymbol i}^{c}(s)+P_{z,i_1}^{c}(s)\sum_{\substack{k\in\mathcal{Z}}}Q^{c}_{z,ki_2}(t_2)+P_{z,i_2}^{c}(s)\sum_{\substack{k\in\mathcal{Z}}}Q^c_{z,ki_1}(t_1)\\
			&\quad+ \sum_{\substack{\boldsymbol k\in\mathcal{Z}^2}}\boldsymbol Q^c_{z,\boldsymbol{ki}}(\boldsymbol t)- C_{z,\boldsymbol i}(\boldsymbol t),\end{split}\label{eq:Zerlegung der Übergangswahrscheinlichkeiten}
	\end{align}where $ C_{z,\boldsymbol i}(\boldsymbol t):=\mathbb{P}(t_1\geq R\vee t_2\geq R, Z_{t_1\wedge R}=i_1,Z_{t_2\wedge R}=i_2,\xi=z)$ for $\boldsymbol i\in\mathcal{Z}^2$. \\
	The next step is to show $\sup_{\boldsymbol t\in(s,\boldsymbol \tau]}\Big| C_{z,\boldsymbol i}(\boldsymbol t)- C_{z,\boldsymbol i}^{(n)}(\boldsymbol t)\Big|\xrightarrow[n\rightarrow\infty]{a.s.}0$. We have\begin{align*}
		&I_{i_1}(t_1\wedge R)I_{i_2}(t_2\wedge R)\mathds{1}_{\{R\leq t_1\vee R\leq t_2\}}\\
		&=\bigg[I_{i_1}(s)I_{i_2}(s)+I_{i_2}(s)\sum_{k\in\mathcal{Z}}N_{ki_1}(t_1\wedge R)+I_{i_1}(s)\sum_{k\in\mathcal{Z}}N_{ki_2}(t_2\wedge R)
		\\&\quad+\sum_{\boldsymbol k\in\mathcal{Z}^2}N_{k_1i_1}(t_1\wedge R)N_{k_2i_2}(t_2\wedge R)\bigg]	\mathds{1}_{\{R\leq t_1\vee R\leq t_2\}}.
	\end{align*}Similar to the approach with $\boldsymbol P_{z,\boldsymbol i}(\boldsymbol t)$ and $\boldsymbol I_{z,\boldsymbol i}^{(n)}(\boldsymbol t)$, we can now separate $C_{z,\boldsymbol i}(\boldsymbol t)$ and $\boldsymbol C_{z,\boldsymbol i}^{(n)}(\boldsymbol t)$ into four summands that are monotonically increasing and bounded on a set with probability $1$. Thus, with the same two-dimensional Glivenko-Cantelli arguments we used before and the strong law of large numbers, we get the almost sure uniform convergence of $\boldsymbol C_{z,\boldsymbol i}^{(n)}(\boldsymbol t)$ to $\boldsymbol C_{z,\boldsymbol i}(\boldsymbol t)$. With equations \eqref{eq:Zerlegung der Übergangswahrscheinlichkeiten}, \eqref{eq:Zerlegung des Schätzers Für die Übergangswahrscheinlichkeiten}, the uniform convergence of the univariate estimators 
	and the law of large numbers, we get the almost sure uniform convergence of $\boldsymbol I_{z,\boldsymbol i}^{(n)}$ to $\boldsymbol P_{z,\boldsymbol i}^{c}$ on $(s,\boldsymbol \tau]$.
\end{proof}
\begin{proof}[Proof of Theorem \ref{theo: convergence of Lambda and P}]
	For the proof of this theorem, we want to apply the continuity of the functional \begin{align*}
		L:(F,G)\mapsto\int\displaylimits_{(s,\boldsymbol t]}\frac{1}{F(u)}\d G(u),
	\end{align*}see Lemma \ref{lem:hadamard diff} for the precise definition of continuity and the contributing function spaces. For this lemma, we need that $\frac{1}{\boldsymbol I_{\boldsymbol i}^{(n)}\vee \epsilon}$, $\boldsymbol N_{z,\boldsymbol{ij}}^{(n)}$, $\frac{1}{\boldsymbol P_{z,\boldsymbol i}^c\vee \epsilon}$, and $\boldsymbol Q_{z,\boldsymbol{ij}}^c$ are all of almost surely uniform bounded variation in $n$, see the Appendix for the definition of the variation \ref{def:variation}. \\
For $\boldsymbol N_{z,\boldsymbol{ij}}^{(n)}$ and $\boldsymbol Q_{z,\boldsymbol{ij}}^c$, the almost sure uniform bounded variation follows directly from the monotonicity and the fact that $\boldsymbol N_{z,\boldsymbol{ij}}^{(n)}(\tau)$ is almost surely bounded, see the proof of Lemma \ref{theo: Konvegenz der erwarteten Sprünge und der Übergangs wahrscheinlichkeiten}.
	For the uniform bounded variation of $\frac{1}{\boldsymbol I_{\boldsymbol i}^{(n)}\vee \epsilon}$ and $\frac{1}{\boldsymbol P_{z,\boldsymbol i}^c\vee \epsilon}$, we use the fact that $$\Big\Vert\frac{1}{f(\boldsymbol x)\vee \epsilon}\Big\Vert_{v_1}^{(2)}\leq \frac{1}{\epsilon}+\frac{1}{\epsilon^4}\Vert f(\boldsymbol x)\Vert_{v_1}^{(2)}.$$
	Now $\Vert{\boldsymbol I_{\boldsymbol i}^{(n)}}\Vert_{v_1}^{(2)}$ and $\Vert{\boldsymbol P_{z,\boldsymbol i}^c}\Vert_{v_1}^{(2)}$ are both of almost surely bounded variation by equation \eqref{I_represent_by_N} and the same techniques as before. With Lemma \ref{lem:hadamard diff} the statement follows immediately.\\
 The next step is the convergence of $\boldsymbol P_{\boldsymbol i}^{(n,\epsilon)}(\boldsymbol t)$. We first need to take care of the problem that $\boldsymbol P_{\boldsymbol i}^{(n,\epsilon)}(\boldsymbol t)$ is defined as a solution to an integral equation. For that we rewrite the integral equation.
	We start by defining $\tilde  P^{(\epsilon)}(\boldsymbol t)$:  \begin{align*}
		 \tilde P^{(\epsilon)}_{l(i-1)+k}(\boldsymbol{t}):=P^{(\epsilon)}_{ki}(\boldsymbol t).
		\end{align*}This $\tilde  P^{(\epsilon)}(\boldsymbol t)$ corresponds to a column by column translation of the matrix $\boldsymbol P^{(\epsilon)}(\boldsymbol t)$. Now rearrange $\boldsymbol \Lambda_{z,\boldsymbol{ij}}(\boldsymbol t)$ into the matrix $\Lambda^{(\epsilon)}$, such that $\tilde  P^{(\epsilon)}(\boldsymbol t)$ is a solution to the inhomogeneous Volterra equation \begin{align}
		Y(\boldsymbol t)=\phi^{(\epsilon)}(\boldsymbol t)+\int\displaylimits_{( s,\boldsymbol t]}Y(\boldsymbol u^-)\Lambda^{(\epsilon)}(\d \boldsymbol u),\label{eq:two-dimensional vector equation}
	\end{align}
		where $\phi^{(\epsilon)}$ corresponds to the one-dimensional part of the two-dimensional integral equation  \begin{align*}
						\phi^{(\epsilon)}(\boldsymbol t)_{l(i_2-1)+i_1}&:=-P^{(\epsilon)}_{i_2}(s)P^{(\epsilon)}_{i_1}(s)+P^{(\epsilon)}_{k}(t_1)P^{(\epsilon)}_{i_1}(s)+P^{(\epsilon)}_{i_2}(s)P^{(\epsilon)}_{i_1}(t_2),
			\end{align*} with $l=\#\mathcal{Z}$. This works because for $\boldsymbol t> s$, the calculation of $P_{\boldsymbol i}(\boldsymbol t)$ only relies on $\boldsymbol\Lambda_{\boldsymbol{ji}}(\boldsymbol t)_{\boldsymbol j\in\mathcal{Z}^2}$, thus, we can rearrange $\boldsymbol P_{\boldsymbol i}$ and $\boldsymbol\Lambda_{\boldsymbol{ji}}(\boldsymbol t)_{\boldsymbol j\in\mathcal{Z}^2}$ into a vector and a matrix. This transformed the rather complicated integral equation \eqref{2dimKolmForwEq} into the well known and understood inhomogeneous Volterra equation.\\		
From Lemma \ref{lem Peano as solution} we know that the solution to this inhomogeneous Volterra equation is \begin{align*}
		\tilde{\boldsymbol P}^{(\epsilon)}(\boldsymbol t)&=\phi^{(\epsilon)}(\boldsymbol t)+\int\displaylimits_{(s,\boldsymbol t]}\phi^{(\epsilon)}(\boldsymbol u^-)\boldsymbol\Lambda^{(\epsilon)}(\d \boldsymbol u)\mathcal{P}((\boldsymbol u,\boldsymbol t],\boldsymbol\Lambda^{(\epsilon)}),
\end{align*} with \begin{align*}
		\mathcal{P}((s,\boldsymbol t],\Lambda)&:=Id+\sum\displaylimits_{n=1}^\infty\;\; \idotsint\displaylimits_{s<\boldsymbol u^{(1)}<...<\boldsymbol u^{(n)}\leq t}\;\;\boldsymbol\Lambda(\d \boldsymbol u^{(1)})\cdots\boldsymbol\Lambda(\d \boldsymbol u^{(n)}).
\end{align*}
Using this solution, we can calculate the difference between the solution of the estimated integral equation and the solution to the theoretical integral equation, where we use $\boldsymbol{\tilde P} ^{(n,\epsilon)}(\boldsymbol t)$ as the solution to the inhomogeneous Volterra equation with estimated $\boldsymbol \Lambda^{(n,\epsilon)}$ and $\phi^{(n,\epsilon)}$. We divide the estimation error into the estimation error of the integrand and the integrator.
	\begin{align}
		\sup_{\boldsymbol t\in(s,\boldsymbol\tau]}\Big\Vert &\tilde {\boldsymbol{P}}^{(\epsilon)}( \boldsymbol t)-\tilde{\boldsymbol{ P}}^{(n,\epsilon)}(\boldsymbol t)\Big\Vert_\infty\nonumber\\
		&\leq\sup_{\boldsymbol t\in(s,\boldsymbol\tau]} \Big\Vert\phi^{(\epsilon)}(\boldsymbol t)-\phi^{(n,\epsilon)}(\boldsymbol t)\Big\Vert\nonumber\nonumber\\
		&\quad+\sup_{\boldsymbol t\in(s,\boldsymbol\tau]}\Big\Vert\int\displaylimits_{(s,\boldsymbol t]}\left(\phi^{(\epsilon)}(\boldsymbol u^-)-\phi^{(n,\epsilon)}(\boldsymbol u^-)\right)\boldsymbol\Lambda^{(n,\epsilon)}(\d \boldsymbol u)\mathcal{P}((\boldsymbol u,\boldsymbol t],\boldsymbol\Lambda^{(n,\epsilon)})\Big\Vert_\infty\label{convergence of P first formula}\\
		 &\quad+\sup_{\boldsymbol t\in(s,\boldsymbol\tau]}\Big\Vert \int\displaylimits_{(s,\boldsymbol t]} \phi^{(\epsilon)}(\boldsymbol u^-)\Big[\Lambda^{(\epsilon)}( \d \boldsymbol u)\mathcal{P}((\boldsymbol u,\boldsymbol t],\boldsymbol\Lambda^{(\epsilon)})-\nonumber
		 \\
		 &\;\;\;\;\;\;\;\;\;\;\;\;\;\;\;\;\;\,\;\;\;\;\;\;\;\;\;\;\;\;\;\;\;\;\;\;\;\,\;\;\;\;\;\;\;\;\;\;\;\;\;\;\;\;\;\;\;\,\;\;\;\;\;\;\boldsymbol\Lambda^{(\epsilon,n)}( \d \boldsymbol u)\mathcal{P}((\boldsymbol u,\boldsymbol t],\boldsymbol\Lambda^{(\epsilon,n)})\Big]\Big\Vert_\infty\label{convergence of P second formula}
	\end{align}
The first two summands \eqref{convergence of P first formula} can be taken care of by simple monotonicity arguments: \begin{align*}
	&\sup_{\boldsymbol t\in(s,\boldsymbol\tau]}\Big\Vert\phi^{(\epsilon)}(\boldsymbol t)-\phi^{(n,\epsilon)}(\boldsymbol t)\Big\Vert_\infty\;\\&\;\;\;\;\;\;\;\;\;\;\;\;\;\;\;\;\;\,\;\;+\sup_{\boldsymbol t\in(s,\boldsymbol\tau]}\Big\Vert\int\displaylimits_{(s,\boldsymbol t]}\phi^{(\epsilon)}(\boldsymbol u^-)-\phi^{(n,\epsilon)}(\boldsymbol u^-)\boldsymbol\Lambda^{(n,\epsilon)}(\d \boldsymbol u)\mathcal{P}((\boldsymbol u,\boldsymbol t],\boldsymbol\Lambda^{(n,\epsilon)})\Big\Vert_\infty\\
	&\;\;\;\;\;\;\;\;\;\;\;\;\;\;\;\;\;\,\;\;\leq \sup_{\boldsymbol t\in(s,\boldsymbol\tau]}\Big\Vert\phi^{(\epsilon)}(\boldsymbol t)-\phi^{(n,\epsilon)}(\boldsymbol t)\Big\Vert_\infty\left(2+\Big\Vert\mathcal{P}((s,\boldsymbol t],\boldsymbol\Lambda^{(n,\epsilon)})\Big\Vert_\infty\right).
\end{align*}
For the second summand \eqref{convergence of P second formula} we again segment the statistical error for the different parts of the integral. We get \begin{align}
	\sup_{\boldsymbol t\in(s,\boldsymbol\tau]}\Big\Vert \int\displaylimits_{(s,\boldsymbol t]} \phi^{(\epsilon)}(\boldsymbol u^-)\left[\boldsymbol\Lambda^{(\epsilon)}( \d \boldsymbol u)\mathcal{P}((\boldsymbol u,\boldsymbol t],\boldsymbol\Lambda^{(\epsilon)})-\boldsymbol\Lambda^{(\epsilon,n)}( \d \boldsymbol u)\mathcal{P}((\boldsymbol u,\boldsymbol t],\boldsymbol\Lambda^{(\epsilon,n)})\right]\Big\Vert_\infty\nonumber\\
	\leq \sup_{\boldsymbol t\in(s,\boldsymbol\tau]}\Big\Vert \int\displaylimits_{(s,\boldsymbol t]} \phi^{(\epsilon)}(\boldsymbol u^-)\boldsymbol\Lambda^{(\epsilon)}( \d \boldsymbol u)\left[\mathcal{P}((\boldsymbol u,\boldsymbol t],\boldsymbol\Lambda^{(\epsilon)})-\mathcal{P}((\boldsymbol u ,\boldsymbol t],\boldsymbol\Lambda^{(\epsilon,n)})\right]\Big\Vert_\infty\label{convergence of P third summand}\\
	+\sup_{\boldsymbol t\in(s,\boldsymbol\tau]}\Big\Vert \int\displaylimits_{(s,\boldsymbol t]} \phi^{(\epsilon)}(\boldsymbol u^-)\left[\boldsymbol\Lambda^{(\epsilon)}( \d \boldsymbol u)-\boldsymbol\Lambda^{(\epsilon,n)}( \d \boldsymbol u)\right]\mathcal{P}((\boldsymbol u,\boldsymbol t],\boldsymbol\Lambda^{(\epsilon,n)})\Big\Vert_\infty\label{convergence of P fourth summand}
\end{align}  For the second summand \eqref{convergence of P fourth summand}, we use two-dimensional integration by parts and the fact, that $\mathcal{P}((\boldsymbol u,\boldsymbol t],\boldsymbol\Lambda^{(\epsilon,n)})$ is monotone in $\boldsymbol u$ and $\boldsymbol t$ and thus, has a finite two-dimensional variation,  and that with the definition of $P_i^{(\epsilon)}$, the definition of $\Vert \cdot\Vert_\infty$, and (37) from Christiansen \& Furrer \cite{christiansen2022extension}, we have \begin{align*}\Big\Vert\big\Vert \phi^{(\epsilon)}\big\Vert_{v_1}^{(2)}\Big\Vert_\infty\leq\left[\Big\Vert\sum_{i\in\mathcal{Z}}\big\Vert P_{i}^{(\epsilon)}\big\Vert_{v_1}^{(1)}\Big\Vert_\infty\right]^2\leq\left[\Big\Vert\big\Vert P(s)\Prodi_{(s,t]}(Id+\Lambda^{(\epsilon)}(\d u))\big\Vert_{v_1}^{(1)}\Big\Vert_\infty\right]^2<\infty.\end{align*} All in all, we find an upper bound of the form $$ C(n) \sup_{\boldsymbol t\in(s,\boldsymbol\tau]}\Big\Vert\boldsymbol\Lambda^{(\epsilon)}(\boldsymbol t)-\boldsymbol\Lambda^{(n,\epsilon)}(\boldsymbol t)\Big\Vert_\infty, $$ where $C(n)$ contains, the variation of $\phi^{(\epsilon)}$, and $\mathcal{P}(( s,\boldsymbol \tau],\Lambda^{(\epsilon,n)})$, which is all almost surely eventually bounded in $n$. \\
For the first summand \eqref{convergence of P third summand} we get with the monotonicity of $\Lambda^{(\epsilon)}$: \begin{align*}
	&\sup_{\boldsymbol t\in(s,\boldsymbol\tau]}\Big\Vert \int\displaylimits_{(s,\boldsymbol t]} \phi^{(n,\epsilon)}(\boldsymbol u^-)\Lambda^{(\epsilon)}( \d\boldsymbol u)\left[\mathcal{P}((\boldsymbol u,\boldsymbol t],\Lambda^{(\epsilon)})-\mathcal{P}((\boldsymbol u,\boldsymbol t],\Lambda^{(\epsilon,n)})\right]\Big\Vert_\infty\\
	&\leq 	\sup_{\boldsymbol t\in(s,\boldsymbol\tau]}\sup_{\boldsymbol u\in(s,\boldsymbol \tau]}\Big\Vert\mathcal{P}((\boldsymbol u,\boldsymbol t],\Lambda^{(\epsilon)})-\mathcal{P}((\boldsymbol u,\boldsymbol t],\Lambda^{(\epsilon,n)})\Big\Vert_\infty\sup_{\boldsymbol u\in(s,\boldsymbol \tau]}\Big\Vert\phi^{(n,\epsilon)}(\boldsymbol u)\Big\Vert_\infty
	\sup_{\boldsymbol u\in(s,\boldsymbol \tau]}\Big\Vert\Lambda^{(\epsilon)}(\boldsymbol u)\Big\Vert_\infty.
\end{align*}Thus, we need to evaluate $\Big\Vert\mathcal{P}((\boldsymbol u,\boldsymbol t],\Lambda^{(\epsilon)})-\mathcal{P}((\boldsymbol u,\boldsymbol t],\Lambda^{(\epsilon,n)})\Big\Vert_\infty$. By applying the Duhamel equality, see Lemma \ref{lem: Duhamel}, we get \begin{align*}
\Big\Vert&\mathcal{P}((\boldsymbol u,\boldsymbol t],\Lambda^{(\epsilon)})-\mathcal{P}((\boldsymbol u,\boldsymbol t],\Lambda^{(\epsilon,n)})\Big\Vert_\infty\\
&=\Big\Vert\int\displaylimits_{(\boldsymbol u,\boldsymbol t]}\mathcal{P}((\boldsymbol u,\boldsymbol t]\cap ( 0,s),\Lambda^{(\epsilon)})(\Lambda^{(\epsilon)}(\d s)-\Lambda^{(\epsilon,n)}(\d s))\mathcal{P}((\boldsymbol u,\boldsymbol t]\cap (s,\infty),\Lambda^{(\epsilon,n)})\Big\Vert_\infty.
\end{align*}Again, we can apply two-dimensional integration by parts and find that there is a constant $C(n)$ which is almost surely bounded eventually and satisfies \begin{align*}
\Big\Vert\int\displaylimits_{(\boldsymbol u,\boldsymbol t]}\mathcal{P}((\boldsymbol u,\boldsymbol t]\cap (0,s),\Lambda^{(\epsilon)})(\Lambda^{(\epsilon)}(\d s)-\Lambda^{(\epsilon,n)}(\d s))\mathcal{P}((\boldsymbol u,\boldsymbol t]\cap (s,\infty),\Lambda^{(\epsilon,n)})\Big\Vert_\infty\\
\leq C(n) \sup_{\boldsymbol t\in(s,\boldsymbol\tau]}\left\Vert\Lambda^{(\epsilon)}(\boldsymbol t)-\Lambda^{(n,\epsilon)}(\boldsymbol t)\right\Vert_\infty.
\end{align*}This $C(n)$ contains the variation of $\mathcal{P}((\boldsymbol u,\boldsymbol t]\cap (0,s),\Lambda^{(\epsilon)})$ and of $\mathcal{P}((\boldsymbol u,\boldsymbol t]\cap (s,\infty),\Lambda^{(\epsilon,n)})$ which is again almost surely bounded eventually, because they are both monotone in $\boldsymbol u$ and $\boldsymbol t$. Combining all of this, we get the desired assertion: $$	\sup_{\boldsymbol t\in(s,\boldsymbol\tau]}\left\Vert \tilde P^{(\epsilon)}(\boldsymbol t)-\tilde P^{(n,\epsilon)}(\boldsymbol t)\right\Vert_\infty \xrightarrow[n\rightarrow\infty]{a.s.}0.$$
Now we have the convergence in $n$ of the estimators to $\boldsymbol \Lambda^{(\epsilon)}_{z,\boldsymbol{ij}}(\boldsymbol t)$ and $ \boldsymbol P_{z,i}^{(\epsilon)}(\boldsymbol t)$. The next step is the convergence in $\epsilon$.\\
	For the convergence of $\boldsymbol\Lambda^{(\epsilon)}_{z,\boldsymbol{ij}}$ we start by showing pointwise convergence. We apply the monotone convergence theorem on $\boldsymbol \Lambda_{z,\boldsymbol{ij}}^{(\epsilon)}$. This works because of \eqref{IntegrabilCond} and equation (35) of Christiansen \cite{christiansen2021calculation}. We get \begin{align*}
	\int\displaylimits_{(s,\boldsymbol t]}\frac{1}{\boldsymbol P_{z,\boldsymbol i}^c(\boldsymbol u^-)\vee \epsilon}\boldsymbol Q_{z,\boldsymbol{ij}}^c(\d  \boldsymbol u)\xrightarrow[\epsilon\rightarrow0]{a.s.}\int\displaylimits_{(s, \boldsymbol t]}\frac{1}{\boldsymbol P_{z,\boldsymbol i}^c( \boldsymbol u^-)}\boldsymbol Q_{z,\boldsymbol{ij}}^c(\d \boldsymbol u)=\int\displaylimits_{(s,\boldsymbol t]}\frac{\mathds{1}_{\{\boldsymbol P_{z,\boldsymbol i}^c(\boldsymbol u^-)>0\}}}{\boldsymbol P_{\boldsymbol z,\boldsymbol i}^c(\boldsymbol u^-)}\boldsymbol Q_{z,\boldsymbol {ij}}^c(\d \boldsymbol u),
\end{align*}for $\boldsymbol t\in(s,\boldsymbol \tau]$. Thus, $\boldsymbol\Lambda_{z,\boldsymbol{ij}}^{(\epsilon)}(\boldsymbol t)\xrightarrow[\epsilon\rightarrow0]{a.s.}\boldsymbol \Lambda_{z,\boldsymbol {ij}}(\boldsymbol t),$ for all $ \boldsymbol t\in(s,\boldsymbol \tau]$. Now $\boldsymbol\Lambda_{z,\boldsymbol{ij}}^{(\epsilon)}$ and $\boldsymbol\Lambda_{z,\boldsymbol{ij}}$ are both monotonically increasing and bounded on $(s,\boldsymbol \tau]$. With arguments similar to the two-dimensional proof of the Gliveno-Cantelli theorem the pointwise convergence can be extended to $$\sup_{ \boldsymbol t\in(s,\boldsymbol\tau]}\Big| \boldsymbol \Lambda_{z,\boldsymbol{ij}}^{(\epsilon)}(\boldsymbol t)- \boldsymbol\Lambda_{z,ij}(\boldsymbol t)\Big| \xrightarrow[\epsilon\rightarrow0]{a.s.}0.$$
The same argumentation works for the univariate conditional transition rates. The application of the monotone convergence theorem proceeds in the same way as the bivariate case.\\
 For the convergence of transition probabilities, we use the same argumentation we used in the convergence in $n$, namely integration by parts and Duhamel equality.\\
 Now we have \begin{align*}
		\sup_{\boldsymbol t\in (s,\tau]}\Big|\boldsymbol\Lambda_{z,\boldsymbol{ij}}^{(n,\epsilon)}(\boldsymbol t)-\boldsymbol\Lambda_{z,\boldsymbol{ij}}(\boldsymbol t)\Big|
		\leq \sup_{\boldsymbol t\in (s,\tau]}\Big|\boldsymbol\Lambda_{z,\boldsymbol{ij}}^{(n,\epsilon)}(\boldsymbol t)-\boldsymbol\Lambda_{z,\boldsymbol{ij}}^{(\epsilon)}(\boldsymbol t)\Big|
		+\sup_{\boldsymbol t\in (s,\tau]}\Big|\boldsymbol\Lambda_{z,\boldsymbol{ij}}^{(\epsilon)}(\boldsymbol t)-\boldsymbol\Lambda_{z,\boldsymbol{ij}}(\boldsymbol t)\Big|.
	\end{align*}This directly concludes the proof of the statement.
\end{proof}
\begin{proof}[Sketch of the proof for Theorem \ref{theo:convergence of Lambda and P 1D}]
	The proof uses convergences in \eqref{eq: Konvergenzen in 1-D} and ideas that are analogous to the ideas that were used in the proof of the $\epsilon$ convergence of Theorem \ref{theo: convergence of Lambda and P}.
\end{proof}
\section{Uniform Convergence in Actuarial Estimation}\label{Chap: Section 5}
\textcolor{black}{Similar to the general multi-state framework, we define $Z=(Z(t))_{t\geq s},$ as the state process of the insured with the state indicator process $(I_{i})_{i\in\mathcal{Z}}$ and the transition counting process $(N_{ij})_{i,j\in\mathcal{Z}}$. The landmark $\xi=Z(s)$, similar to the information model used in Markov modelling in the actuarial context, is realistic in application, but different conditioning is also feasible.}\\ Let $B$ be the insurance cash flow of an individual life insurance contract. We assume $B$ to be an adapted c\`{a}dl\`{a}g process with paths of finite variation. For modelling purposes we use a maximum contract horizon of $T$, which means that
\begin{align*}
	B(\d t) = 0 , \quad t > T.
\end{align*}
\noindent
\textcolor{black}{
  Several models have been proposed to estimate the expected value of these future cash flows.  The Markov model only allows for restrictive payment functions in the future cash flow $B(t)$ and requires the Markov assumption. The more general semi-Markov model allows for the estimation of cash flows with options as policyholder behaviour, see Ahmad et al.\ in \cite{Ahmad_Buchardt_Furrer_2022}. The disadvantage of this model is that it still introduces systematic model risk because we cannot verify the required Markov assumption and that the estimation is computationally expensive. Therefore, modelling without the Markov assumption in a non-Markov model is the more general way to model future cash flows. This has been done for scaled cash flows by Christiansen and Furrer in \cite{christiansen2022extension}. There they use an approach that uses contract-dependent estimates to account for policyholder behaviour. The disadvantage is that, in practice, re-estimating for different insurance contracts is time-consuming and introduces other problems, particularly in terms of data usage and storage. \\
 The conditional bivariate transition and transition probabilities introduced in this paper, allow for an estimation procedure that is independent of the design of the insurance contract and allows for a wide variety of payment functions. In practice, this variability allows a service provider to estimate these rates and probabilities once with its combined data and then distribute them to the insurance companies without distributing the data. This is already good practice, for example, with the German Actuarial Association and its semi-Markov tables. \\
 We now introduce the different cash flows $B(t)$ that we use to model the cash flow of insurance contracts.
}\\
\noindent
Classical Markov modelling uses payment functions that allow payments to depend on the current state of the insured and on current jumps from one state to another. A classical cash flow that includes both of these option is a cash flow with a so-called one-dimensional canonical representation. 
\begin{definition}
	A stochastic process $B$ is said to have a \emph{one-dimensional canonical representation} if there exist real-valued functions $(B_{i})_{i}$ on $[0,\infty)$ with finite variation on finite intervals and measurable functions $(b_{ij})_{ij:i\neq j}$, which also have finite variation on finite intervals such that
	\begin{align}
		B(t)=\sum_{i \in \mathcal{Z}} \;\int\displaylimits_{(s,t]} I_i(u^-) B_i(\d u)+\sum_{i,j\in\mathcal{Z} \atop i\neq j}\;\int\displaylimits_{(s,t]} b_{ij}(u^-) N_{ij}(\d u),\quad t\geq s.\label{def:DefOfB}
	\end{align}
\end{definition}
In practice there often arise cash flows that do not have such a representation. Reasons for that can be contract modifications or general analysis of higher-dimensional cash flows. For this reason we allow for a more complex structure.
\begin{definition}
	A stochastic process $B$ is said to have a \emph{two-dimensional canonical representation} if there exist real-valued functions $(B_{i})_{i}$ on $[0,\infty)$ which are a difference of two non-decreasing upper continuous functions, real-valued functions $(B_{ij})_{ij}$ on $[0,\infty)^2$ which are a difference of two non-decreasing upper-continuous functions, and measurable and bounded real-valued functions $(b_{ikl})_{ikl}$, $(b_{ijkl})_{ijkl}$ on $[0,\infty)^2$ which are also the difference of two non-decreasing  upper-continuous functions, such that
	\begin{align}\begin{split}
			\boldsymbol B(\boldsymbol t)&=\sum_{i,j \in \mathcal{Z}}\; \;\int\displaylimits_{(s,\boldsymbol t]} I_i(u_1^-)I_j(u_2^-) B_{ij}(\d u_1, \d u_2)\\
			&\quad + \sum_{\substack{i,k,l \in\mathcal{Z} \\k\neq l}} \;\int\displaylimits_{(s,\boldsymbol t]} I_i(u_1^-)b_{ikl}(u_1^-,u_2^-) B_{i}(\d u_1)  N_{kl}(\d u_2)\\
			&\quad +\sum_{\substack{i,j,k,l \in \mathcal{Z}\\i \neq j, k\neq l}}\;\int\displaylimits_{(s,\boldsymbol t]} b_{ijkl}(u_1^-,u_2^-)  N_{ij}(\d u_1) N_{kl}(\d u_2),\quad \boldsymbol t\geq s.\label{def:DefOfB2}
		\end{split}
	\end{align}
\end{definition}
These processes with one- or two-dimensional canonical representations have been used by Bathke \& Christiansen \cite{bathke2022two} to model the conditional variance of future liabilities in an as-if Markov model and cash flows with special types of path-dependent payout functions. They do this by computing conditional expected values of stochastic processes with a one- or two-dimensional canonical representation.
We first define the conditional expected values of these cash flows\begin{align*}
	\boldsymbol A_z(t):=\mathbb{E}\Big[\int\displaylimits_{(s,t]^2}\boldsymbol B(\d \boldsymbol u)\Big|\xi=z\Big],\\
	 A_z(t):=\mathbb{E}\Big[\int\displaylimits_{(s,t]} B(\d u)\Big|\xi=z\Big].
\end{align*}
\textcolor{black}{In practice, a positive probability landmark should be used to condition the cash flows.}
These cash flows with a  two-dimensional canonical representation have already been studied by Bathke \& Christiansen in \cite{bathke2022two}. They showed
 \begin{align}\begin{split}
	\boldsymbol A_z(t)&=\sum_{i,j\in\mathcal{Z}} \;\int\displaylimits_{(s,t]^2}\boldsymbol P_{z,ij}(\boldsymbol u^-)B_{ij}(\d \boldsymbol u)\\
		&\quad+\sum_{\substack{i,k,l\in\mathcal{Z}\\k\neq l}}I_i(s) \int\displaylimits_{(s,t]^2}b_{ikl}(u_1^-,u_2^-) B_i(\d u_1) {P}_{z,k}(u_2^-) \Lambda_{z,kl}(\d u_2)\\
		&\quad+\sum_{\boldsymbol i\in\mathcal{Z}, \boldsymbol{j}\in\mathcal{Z}_{\neq}}\;\;\int\displaylimits_{(s,t]}\; \int\displaylimits_{(s,t] \times (s,u_1)} {b}_{j_2 i_1 j_1}(u_1^-,u_2^-)
		\boldsymbol{P}_{z,\boldsymbol i}(u_2^-,u_3^-) \boldsymbol \Lambda_{z,\boldsymbol{ij}}(\d u_2,\d u_3) B_{j_2}(\d u_1)\\
		&\quad+\sum_{\boldsymbol{i},\boldsymbol j\in\mathcal{Z}_{\neq}}\;\int\displaylimits_{(s,t]^2}b_{i_1j_1i_2j_2}(u_1^-,u_2^-) \boldsymbol P_{z,\boldsymbol i}(\boldsymbol u^-)\boldsymbol\Lambda_{z,\boldsymbol{ij}}(\d \boldsymbol u^-).\label{eq: expected value of two-dimensional cashflow}
		\end{split}
\end{align}where $\mathcal{Z}_{\neq}:=\{\boldsymbol i\in\mathcal{Z}|i_1\neq i_2\}$. In the one-dimensional case, Christiansen \cite{christiansen2021calculation} showed
\begin{align*}
	A_z(t)&=\sum_{i\in\mathcal{Z}}\int\displaylimits_{(s,t]}P_{z,i}(u^-)B_i(\d u)+\sum_{\substack{(i,j)\in\mathcal{Z}_{\neq}}}\int\displaylimits_{(s,t]} b_{ij}(u^-)P_{z,i}(u^-)\Lambda_{z,ij}(\d u).
\end{align*}

These $\boldsymbol A_z$ and $A_z$ can be understood as a functionals $\boldsymbol A_z({P}_{z},\boldsymbol P_{z},\Lambda_z,\boldsymbol \Lambda_z)$  and $ A_z({P}_{z},\Lambda_z)$ of the uni- and bivariate conditional transition rates and  transition probabilities. For the purposes of reserving in the insurer's balance sheet, we now need to estimate these conditional expected values. In practice, we first estimate the conditional transition rates and probabilities as seen in the previous section and then plug them into the functionals $\boldsymbol A_z$ and $A_z$. This works because of the following theorems.
\begin{theorem}\label{cor: Konvergenz der eindimensionalen aktuariellen schätzer}
For a stochastic process that has a one-dimensional canonical cash flow representation, the functional $ A_z$ is continuous in sup-norm in all arguments, meaning that the convergence in sup-norm in the domain results in convergence in the codomain. Additionally,
	\begin{align*}
		\lim_{\epsilon\rightarrow 0}\lim_{n\rightarrow \infty}\Big|A_z({P}_{z}^{(n,\epsilon)},\Lambda_z^{(n,\epsilon)})(t)-A_z({P}_{z}, \Lambda_z)(t)\Big|\stackrel{a.s.}{=}0,\quad\forall t>s.
	\end{align*}
\end{theorem}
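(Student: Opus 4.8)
The plan is to exploit the explicit formula for $V_z$ together with the uniform convergences already established in Theorem \ref{theo:convergence of Lambda and P 1D}. Recall that
\begin{align*}
	V_z(P_z,\Lambda_z)&=\sum_{i\in\mathcal{Z}}\int\displaylimits_{(s,T]}P_{z,i}(u^-)A_i(\d u)+\sum_{\substack{i,j\in\mathcal{Z}\\i\neq j}}\int\displaylimits_{(s,T]} a_{ij}(u^-)P_{z,i}(u^-)\Lambda_{z,ij}(\d u),
\end{align*}
which is a finite sum of Lebesgue--Stieltjes integrals of two types: integrals of a bounded $\mathbb{F}$-adapted integrand against the fixed finite-variation measures $A_i(\d u)$, and integrals of the bounded integrand $a_{ij}(u^-)P_{z,i}(u^-)$ against the measure $\Lambda_{z,ij}(\d u)$. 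First I would reduce the claim to a statement about each summand separately, since a finite sum of convergent sequences converges.

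For the first type of summand the integrator $A_i$ is deterministic and of finite variation, so the error is bounded by $\|A_i\|_{v}^{(1)}\sup_{u\in(s,T]}|P_{z,i}^{(n,\epsilon)}(u^-)-P_{z,i}(u^-)|$, which tends to $0$ almost surely by the second convergence in Theorem \ref{theo:convergence of Lambda and P 1D} (and since $T\le\tau$ may be assumed, or one works on $(s,\tau]$ with $\tau\geq T$). For the second type of summand I would split the error into an integrand part and an integrator part:
\begin{align*}
	&\Big|\int_{(s,T]}a_{ij}(u^-)P_{z,i}^{(n,\epsilon)}(u^-)\Lambda_{z,ij}^{(n,\epsilon)}(\d u)-\int_{(s,T]}a_{ij}(u^-)P_{z,i}(u^-)\Lambda_{z,ij}(\d u)\Big|\\
	&\leq \|a_{ij}\|_\infty\|\Lambda_{z,ij}^{(n,\epsilon)}\|_{v}^{(1)}\sup_{u}|P_{z,i}^{(n,\epsilon)}(u^-)-P_{z,i}(u^-)|+\Big|\int_{(s,T]}a_{ij}(u^-)P_{z,i}(u^-)\big(\Lambda_{z,ij}^{(n,\epsilon)}-\Lambda_{z,ij}\big)(\d u)\Big|.
\end{align*}
The first term vanishes once we know $\|\Lambda_{z,ij}^{(n,\epsilon)}\|_{v}^{(1)}$ is almost surely eventually bounded in $n$, which follows from monotonicity of $\Lambda_{z,ij}^{(n,\epsilon)}$ and the boundedness of $\Lambda_{z,ij}^{(n,\epsilon)}(\tau)$ (established exactly as in the proofs above). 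The second term is handled by one-dimensional integration by parts, turning it into an integral of $\Lambda_{z,ij}^{(n,\epsilon)}-\Lambda_{z,ij}$ against the finite-variation function $a_{ij}(u^-)P_{z,i}(u^-)$ plus boundary terms, all of which are controlled by $\sup_u|\Lambda_{z,ij}^{(n,\epsilon)}(u)-\Lambda_{z,ij}(u)|$ times an almost surely eventually bounded constant; this converges to $0$ by the first convergence in Theorem \ref{theo:convergence of Lambda and P 1D}.

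The main obstacle is the same issue that recurs throughout this paper: one must be careful that the iterated-limit structure $\lim_{\epsilon\to0}\lim_{n\to\infty}$ is respected, i.e. for \emph{fixed} $\epsilon$ one first lets $n\to\infty$ to replace the estimators by their perturbed population versions $P_{z,i}^{(\epsilon)}$, $\Lambda_{z,ij}^{(\epsilon)}$, and only then lets $\epsilon\to0$. Concretely, I would first prove $|V_z(P_z^{(n,\epsilon)},\Lambda_z^{(n,\epsilon)})-V_z(P_z^{(\epsilon)},\Lambda_z^{(\epsilon)})|\to0$ a.s.\ as $n\to\infty$ using the uniform $n$-convergences, and separately prove $|V_z(P_z^{(\epsilon)},\Lambda_z^{(\epsilon)})-V_z(P_z,\Lambda_z)|\to0$ a.s.\ as $\epsilon\to0$ using the $\epsilon$-convergences (both of which are contained in Theorem \ref{theo:convergence of Lambda and P 1D}), then combine via the triangle inequality. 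The only subtlety is ensuring the constants appearing as bounds (the variations $\|\Lambda_{z,ij}^{(\epsilon)}\|_{v}^{(1)}$ and $\|\Lambda_{z,ij}^{(n,\epsilon)}\|_{v}^{(1)}$, and $\|P_{z,i}^{(\epsilon)}\|_\infty$) are uniformly bounded in the relevant limit; for the $\epsilon\to0$ step this uses that $\Lambda_{z,ij}^{(\epsilon)}\uparrow\Lambda_{z,ij}$ pointwise with $\Lambda_{z,ij}(\tau)<\infty$ by assumption \eqref{IntegrabilCond}, and for the $n\to\infty$ step it uses almost sure eventual boundedness as in the proof of Lemma \ref{theo: Konvegenz der erwarteten Sprünge und der Übergangs wahrscheinlichkeiten}. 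Everything else is a routine decomposition into finitely many pieces.
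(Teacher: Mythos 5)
Your proof is correct and follows essentially the same strategy as the paper: reduce to the explicit integral formula for $V_z$, verify bounded variation of all integrands and integrators (including uniform boundedness of $\|\Lambda^{(n,\epsilon)}_{z,ij}\|_{v_1}^{(1)}$ in $n$), and then transfer the sup-norm convergences of Theorem \ref{theo:convergence of Lambda and P 1D} to the integrals. The only cosmetic difference is that the paper packages the key continuity step as an invocation of the Hadamard-differentiability result in Lemma \ref{lem:hadamard diff}, whereas you unpack it by hand via triangle inequality and one-dimensional integration by parts — this is precisely what proving such continuity amounts to, so it is the same idea, and your explicit treatment of the iterated limit $\lim_{\epsilon\to 0}\lim_{n\to\infty}$ is a useful clarification of a point the paper's sketch leaves implicit.
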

The proof of this theorem follows at the end of this section. 
\begin{theorem}\label{continuity of V}
	For a stochastic process that has a two-dimensional canonical cash flow representation, the functional $\boldsymbol A_z$ is continuous in sup-norm in all $4$ arguments, meaning that the convergence in sup-norm in the domain results in convergence in the codomain.\\
	Additionally,
	\begin{align*}
		\lim_{\epsilon\rightarrow 0}\lim_{n\rightarrow \infty}\Big|\boldsymbol A_z({P}_{z}^{(n,\epsilon)},\boldsymbol P_{z}^{(n,\epsilon)},\Lambda_z^{(n,\epsilon)},\boldsymbol \Lambda_z^{(n,\epsilon)})(t)-\boldsymbol A_z({P}_{z},\boldsymbol P_{z},\Lambda_z,\boldsymbol \Lambda_z)(t)\Big|\stackrel{a.s.}{=}0,\quad\forall t>s.
	\end{align*}
\end{theorem}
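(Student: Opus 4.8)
The plan is to regard the right-hand side of \eqref{eq: expected value of two-dimensional cashflow} as a sum of four elementary functionals of $(P_z,\boldsymbol P_z,\Lambda_z,\boldsymbol\Lambda_z)$, to prove that each of them is continuous for uniform convergence of its arguments as long as the relevant variations stay bounded, and then to insert the almost sure uniform convergences supplied by Theorem~\ref{theo:convergence of Lambda and P 1D} and Theorem~\ref{theo: convergence of Lambda and P}. The one-dimensional statement of Theorem~\ref{cor: Konvergenz der eindimensionalen aktuariellen schätzer} is the strictly simpler special case in which only the first and the last summand of the univariate analogue of \eqref{eq: expected value of two-dimensional cashflow} occur, and it is handled by the same argument.

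First I would record the structural facts. By assumption the payment functions $A_{i}$, $A_{ij}$, $a_{ikl}$, $a_{ijkl}$ are differences of two non-decreasing upper-continuous functions, hence of finite one- resp.\ two-dimensional variation on $[0,T]$ resp.\ $[0,T]^2$, and $a_{ikl}$, $a_{ijkl}$ are in addition bounded; together with the integrability assumption \eqref{IntegrabilCond} this makes every summand of $\boldsymbol V_z$ a well-defined Lebesgue--Stieltjes integral. The analytic core is the standard pair of estimates: for a bounded integrand $f$ and an integrator $g$ of finite variation, $\sup_{\boldsymbol t}\bigl|\int_{(s,\boldsymbol t]}(f_n-f)\,\mathrm{d}g\bigr|\le \|f_n-f\|_\infty\,\|g\|_{v_1}^{(2)}$, while a perturbation of the integrator is moved onto the integrand by the two-dimensional integration-by-parts formula from the Appendix, at the price of $\|f\|_{v_1}^{(2)}$. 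Applying this summand by summand and telescoping the products inside the integrands (exactly as in the proof of Theorem~\ref{theo: convergence of Lambda and P}), the whole error is bounded by a finite sum of terms of the form $C(n)\,\sup|P_z^{(n,\epsilon)}-P_z^{(\epsilon)}|$, $C(n)\,\sup|\boldsymbol P_z^{(n,\epsilon)}-\boldsymbol P_z^{(\epsilon)}|$, $C(n)\,\sup|\Lambda_z^{(n,\epsilon)}-\Lambda_z^{(\epsilon)}|$, $C(n)\,\sup|\boldsymbol\Lambda_z^{(n,\epsilon)}-\boldsymbol\Lambda_z^{(\epsilon)}|$, the constants $C(n)$ being built from the (fixed) variations of the payment functions and from the variations of $\Lambda_z^{(n,\epsilon)},\boldsymbol\Lambda_z^{(n,\epsilon)},P_z^{(n,\epsilon)},\boldsymbol P_z^{(n,\epsilon)}$.

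Hence the decisive point, as before, is that these latter variations are almost surely bounded in $n$ eventually. For $\Lambda_z^{(n,\epsilon)}$ and $\boldsymbol\Lambda_z^{(n,\epsilon)}$ this is already contained in the proof of Theorem~\ref{theo: convergence of Lambda and P}, via the monotonicity and almost sure boundedness of $N^{(n)},\boldsymbol N^{(n)}$ and the inequality $\|1/(f\vee\epsilon)\|_{v_1}^{(2)}\le \epsilon^{-1}+\epsilon^{-4}\|f\|_{v_1}^{(2)}$; for $P_z^{(n,\epsilon)}$ and $\boldsymbol P_z^{(n,\epsilon)}$ one argues from their product-integral resp.\ Peano-series representation, the identity \eqref{I_represent_by_N}, and the monotonicity of the underlying counting processes, again as in that proof. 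The one place where the bookkeeping is not completely routine is the third summand of \eqref{eq: expected value of two-dimensional cashflow}, whose inner integral runs over the $u_1$-dependent region $(s,T]\times(s,u_1)$: here I would treat the inner two-dimensional integral as a function $\Phi(u_1)$ of the outer variable, obtain $\|\Phi_n-\Phi\|_\infty\to 0$ from the two-dimensional estimates above uniformly in $u_1$ (the restricted region being contained in $(s,T]^2$, so that all variations entering the bound are dominated by those over $(s,T]^2$), and then finish with the one-dimensional estimate $|\int_{(s,T]}(\Phi_n-\Phi)\,A_{j_2}(\mathrm{d}u_1)|\le\|\Phi_n-\Phi\|_\infty\,\|A_{j_2}\|_{v_1}^{(1)}$. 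I expect this uniform-in-$u_1$ control of the nested integral, rather than any single estimate, to be the main obstacle.

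Finally, with the continuity of each of the four functionals established, the $n\to\infty$ parts of Theorems~\ref{theo:convergence of Lambda and P 1D} and \ref{theo: convergence of Lambda and P} give $\boldsymbol V_z(P_z^{(n,\epsilon)},\boldsymbol P_z^{(n,\epsilon)},\Lambda_z^{(n,\epsilon)},\boldsymbol\Lambda_z^{(n,\epsilon)})\to\boldsymbol V_z(P_z^{(\epsilon)},\boldsymbol P_z^{(\epsilon)},\Lambda_z^{(\epsilon)},\boldsymbol\Lambda_z^{(\epsilon)})$ almost surely; and the $\epsilon\to 0$ parts of the same theorems, together with one further application of continuity (the variations of $\Lambda_z^{(\epsilon)},\boldsymbol\Lambda_z^{(\epsilon)},P_z^{(\epsilon)},\boldsymbol P_z^{(\epsilon)}$ on $(s,\boldsymbol\tau]$ being bounded uniformly in $\epsilon$ by the same monotonicity arguments and by \eqref{IntegrabilCond}), yield $\lim_{\epsilon\to 0}\lim_{n\to\infty}\bigl|\boldsymbol V_z(P_z^{(n,\epsilon)},\boldsymbol P_z^{(n,\epsilon)},\Lambda_z^{(n,\epsilon)},\boldsymbol\Lambda_z^{(n,\epsilon)})-\boldsymbol V_z(P_z,\boldsymbol P_z,\Lambda_z,\boldsymbol\Lambda_z)\bigr|\stackrel{a.s.}{=}0$, which is the assertion.
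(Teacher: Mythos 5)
Your proposal follows essentially the same route as the paper: decompose $\boldsymbol V_z$ according to \eqref{eq: expected value of two-dimensional cashflow}, establish almost sure (eventually in $n$, uniform in $\epsilon$) bounded variation of each integrand and integrator using the canonical-representation assumption, the proof of Theorem~\ref{theo: convergence of Lambda and P}, and equation~\eqref{I_represent_by_N}, and then conclude by continuity of the integral functional. Where you use the explicit sup-norm-times-variation estimate and two-dimensional integration by parts directly, the paper invokes the weakly continuous Hadamard differentiability of $(F,G)\mapsto\int F\,\d G$ via Lemma~\ref{lem:hadamard diff}, but these are the same analytic content. Your treatment is actually a little more scrupulous than the paper's in two respects: you single out the third summand of \eqref{eq: expected value of two-dimensional cashflow} with its $u_1$-dependent inner domain and explain the uniform-in-$u_1$ control, whereas the paper dismisses this as ``the same ideas''; and you make explicit that for the final $\epsilon\to 0$ step one needs the variations of $P_z^{(\epsilon)},\boldsymbol P_z^{(\epsilon)},\Lambda_z^{(\epsilon)},\boldsymbol\Lambda_z^{(\epsilon)}$ bounded uniformly in $\epsilon$, a point the paper leaves implicit. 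No gap.
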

The proof of this theorem follows at the end of this section. 
\begin{remark}
	All in all, the convergence we get is an almost sure uniform convergence, including the convergence in $\epsilon$. This type of convergence is different from the already established convergence for the one-dimensional cash flows by Christiansen \& Furrer \cite{christiansen2022extension}. They used $\mathcal{L}^1$ convergence in $p$ variation and manage to circumvent the use of the $\epsilon$ perturbation. The $\mathcal{L}^1$ convergence in $p$-variation is different from almost sure convergence in supremum norm, see Vitalis convergence theorem.
\end{remark}
The previous two theorems can now be used to estimate the conditional expected values of one-dimensional canonical cash flows, to estimate the conditional second moment of one-dimensional canonical cash flows that turn out to follow a stochastic process with a two-dimensional canonical representation, or to estimate the conditional expected values of path-dependent cash flows that can be rewritten as a stochastic process with a two-dimensional canonical representation. The general idea follows three steps.\begin{itemize}
	\item[i)] Estimate the conditional transition rates from data.
	\item[ii)] Plug in the estimated conditional transition rates and solve equation \eqref{eq: zwei-dimensionale Kolmogorov Gleichung} and equation \eqref{eq: GenKolmForwardEquation}.
	\item[iii)] Plug these estimators in the functionals $\boldsymbol A_z$ and $A_z$.
\end{itemize} 

\begin{proof}[Proof of Theorem \ref{continuity of V}]
	We show the theorem for $t=T$, for all $t<T$ the proof works the same. We start by showing the continuity of $\boldsymbol{A}_z$ with the help of Lemma \ref{lem:hadamard diff} and especially the fact that $B(F,G)\mapsto\int F(s)\d G(s)$ is Hadamard differentiable. For this differentiability to apply, we need bounded variation of all the integrators and integrands in equation \eqref{eq: expected value of two-dimensional cashflow}. 
	For the first integral, we have bounded variation in $n$ of $\boldsymbol P_{z}^{(n,\epsilon)}$ and $\boldsymbol P_{z}$ as we have seen in the proof of Theorem \ref{theo: convergence of Lambda and P} with the use of equation \eqref{I_represent_by_N}, additionally we have that $B_{ij}$ is of bounded variation because it was defined as the difference of two non-decreasing upper continuous functions.\\
	For the second summand, we need $
		\int\displaylimits_{(s,T]}b_{ikl}(u_1^-,u_2^-)B_i(\d u_1)
	 $ to be of bounded variation in $u_2$. For that we use the fact, that $b_{ikl}$ and $B_i$ are both functions that can be written as the difference of two non-decreasing functions $b^{+}_{ikl}$, $b^{-}_{ikl}$, $B_i^+$ and $B_i^-$. Plugging all of these in, we can bound the variation by \begin{align*}
		\Big\Vert \int\displaylimits_{(s,T]}b_{ikl}(u_1^-,u_2^-)B_i(\d u_1)\Big\Vert_{v_1}^{(1)}\leq& \int\displaylimits_{(s,T]}b_{ikl}^+(u_1^-,T)-b_{ikl}^+(u_1^-,s)B_i^+(\d u_1)\\
		&\;+\int\displaylimits_{(s,T]}b_{ikl}^-(u_1^-,T)-b_{ikl}^-(u_1^-,s)B_i^+(\d u_1)\\
		&\;+\int\displaylimits_{(s,T]}b_{ikl}^+(u_1^-,T)-b_{ikl}^+(u_1^-,s)B_i^-\d u_1)\\
		&\;+\int\displaylimits_{(s,T]}b_{ikl}^-(u_1^-,T)-b_{ikl}^-(u_1^-,s)B_i^-(\d u_1).
	\end{align*}
	Additionally, $P_z^{(n,\epsilon)}$ and $P_z$ are of uniform bounded variation, which can be seen similarly to $\boldsymbol P_z^{(n,\epsilon)}$ and $\boldsymbol P_z$ with equation \ref{I_represent_by_N}. For the last two summands, we use the same ideas and the fact, that $B_j$, $b_{ijk}$, and $b_{ijkl}$ are all the difference of two non-decreasing functions.
	Thus, we have proved the continuity of $A_z$ and with that found a way to estimate the conditional expected value of a stochastic process with a two-dimensional canonical cash flow representation.
\end{proof}
\begin{proof}[Sketch of the proof of Theorem \ref{cor: Konvergenz der eindimensionalen aktuariellen schätzer}]
The proof of this theorem uses the same ideas already used for the continuity of the two-dimensional cash flow.  We use the fact that $A_i$ and $a_{ij}$ both have finite variation and the fact, that integrals of these forms are Hadamard differentiable, see Lemma \ref{lem:hadamard diff}.
	\end{proof}

\section{Numerical Example}\label{Chap: numerical example}
\textcolor{black}{We consider a numerical example, where the bivariate conditional transition rates and transition probabilities arise naturally in the conditional expected values of a future cash flow. The focus of the numerical study is to illustrate how our two-dimensional cash flows from Section \ref{Chap: Section 5} can be applied in practice and how censoring affects the estimations. This is not intended to be a full statistical analysis, but merely an exploratory example.\\
The basic setup is similar to that used for the numerical study in the paper by Buchardt and M\o ller \cite{buchardt2015cash}. As a technical basis, we consider a survival model, i.e.\ a Markov model with the two states alive (1) and dead (4). The modelled insurance contract starts at the age of 40 and the retirement age is 65. The insured pays a lump sum of 100,000 at the beginning of the contract and an annual premium of 10,000 until the insured reaches the age of 65 and receives annual pension payments. All payments cease when the insured dies or reaches the age of 100. We get the following sojourn payment function: \begin{align*}
	B_1(\operatorname{dt} )&:=(-10,000\mathds{1}_{\{t<65\}}+p\mathds{1}_{\{100\geq t\geq 65\}})\operatorname{dt} ,
\end{align*} where $p$ is the annuity calculated on the technical basis according to the equivalence principle. The technical basis is generally used to price insurance contracts, while the extended model of the market basis is used for the insurers' balance sheets. The technical basis uses the transition intensity \begin{align*}
	\mu^\star_{14}(t):=0.005+10^{(5.728-10+0.038 t)},
\end{align*} and an interest rate of $0\%$.
The market basis consists of four states, with states (1) and (4) denoting alive and dead states, mirroring the technical basis. In addition, state (2) denotes the free policy option and state (3) denotes surrender, where free policy means that premiums are waived and benefits are reduced accordingly. Surrender means a complete cessation of both benefits and payments with a final transitional payment, usually in the form of the current reserve. We therefore introduce two options that model policyholder behaviour for the market basis. The additional payment functions are modelled as follows:
\begin{align*}
	b_{13}(t)&=V_1(t),\\
	b_{23}(t,u)&=V_1^{+}(t)\rho(u),
\end{align*} for the transition payments, where $V_1^{+}(t)$ is the technical reserve of the pension payments, and 
\begin{align*}
B_2(\operatorname{dt},u)&:=\rho(u)p\mathds{1}_{\{100\geq t\geq 65\}}\operatorname{dt} ,
\end{align*} for the sojourn payments, where $p$ is the premium that is paid in state (1). The factor $\rho(u)$ defines the reduction of the benefits in the free policy state (2). It is calculated by 
\begin{align*}
	\rho(u)=\frac{A_1^{+}(u)-A_1^{-}(u)}{A_1^+(u)},
\end{align*} where $A_1^{-}(t)$ is the technical reserve of premium payments. The dependence of the payments functions on $u$ models the dependence on the transition time to state (2). For this reason, bivariate transition rates are natural in this application. See the work of Ahmad et al.\ in \cite{Ahmad_Buchardt_Furrer_2022} for the theoretical background of free policy option calculations.\\
We model the market basis as a semi-Markov model with transition intensities \begin{align*}
	\mu_{14}(t)&:=\mu^\star_{14}(t),\\
	\mu_{24}(t)&:=\mu^\star_{14}(t),\\
	\mu_{12}(t)&:=0.1\mathds{1}_{\{t\leq 65\}}\\
	\mu_{13}(t)&:=0.05\mathds{1}_{\{t\leq 65\}}\\
	\mu_{23}(t,u)&:=(0.05+0.2\mathds{1}_{\{u\in[\frac{1}{2},\frac{5}{2})\}})\mathds{1}_{\{t\leq 65\}},
\end{align*}and an interest rate of $3\%$ annually. We estimate the conditional expected future cash flow of the market basis in an as-if Markov model, i.e.\ the landmark is $\xi=Z(40)$ and since 40 is the starting point of the insurance contract, we assume that each insured is in state (1) at $s=40$. For an as-if Markov model and a cash flow with scaled payments such as the free policy option, Bathke and Christiansen in \cite{bathke2022two} found a two-dimensional canonical representation for the resulting conditional expected future cash flow. Adapting it to this example, we get \begin{align}
\begin{split}
\boldsymbol A_1(t)&=\int\displaylimits_{(40,t]} \frac{\kappa(40)}{\kappa(u)} p_{1,1}(u^-)  B_1(\d u)  + \int\displaylimits_{(40,t]}  \frac{\kappa(40)}{\kappa(u)} b_{13}(u)  p_{1,1}(u^-)   \Lambda_{1,(1,3)}(\d u)\\  &\quad+\int\displaylimits_{(40,t]}\;\int\displaylimits_{(40,65]\times (40,t)}\frac{\kappa(40)}{\kappa(t)}\rho(u_1)\bigg( 
\boldsymbol p_{1,(1,1)}(\boldsymbol{u^-}) \boldsymbol\Lambda_{1,(1,1)(2,2)}(\mathrm{d}\boldsymbol{u})  \\
&\hspace{50mm}-
\sum_{k \in \{3,4\}} \boldsymbol p_{1,(1,2)}(\boldsymbol{u}^-) \boldsymbol\Lambda_{1,(1,2)(2,k)}(\mathrm{d}\boldsymbol{u})
\bigg)p\mathds{1}_{\{t\geq 65\}}\mathrm{d} t\\
&\quad+ \int\displaylimits_{(40,t]\times(40,t]}\frac{\kappa(s)}{\kappa(u_1)}\rho(u_1)b_{23}(u_2,u_1)\boldsymbol p_{1,(1,2)}(\boldsymbol{u^-})\boldsymbol\Lambda_{1,(1,2)(2,3)}(\mathrm{d}\boldsymbol{u}),\label{eq: twodimensional cash flow}
\end{split}
\end{align}where $t\leq 100$ and $\kappa(\cdot)$ is the value of a deterministic cash account. The conditional expected value of this cash flow is an example where the estimation using the as-if Markov approach is natural, because we have jumps from state (2) to state (3) or (4) in the cash flow that are dependent on the jump from state (1) to state (2). Such dependencies are the strength of the conditional bivariate estimation.\\
We introduce some technical background to the estimation of this expected future cash flow in the as-if Markov model.
The estimation of the conditional bivariate transition rates and transition probabilities for the market basis can be simplified. We can reduce the required transition rates to $\Lambda_{(1,2),(2,4)}$,$\Lambda_{(1,2),(2,3)}$, $\Lambda_{(1,2),(1,2)}$, $\Lambda_{(1,4),(1,4)}$ and $\Lambda_{(1,3),(1,3)}$ where the last two are only needed to estimate the transition probability $p_{11}$. This reduction originates from the reduced number of possible jumps in the market basis. The required transition rates can also be reduced. We only need $p_{11},p_{12}$ and $p_{13}$. This greatly reduces computation time and space.\\
Counting the number of steps required to compute the transition rates and probabilities, we get a time complexity of order $\mathcal{O}(n^2)$ and a space complexity of order $\mathcal{O}(n^2)$. Intuitively, we get this complexity because there are $\mathcal{O}(n^2)$ time points on the time grid given by the data, and we need to recursively compute the estimator on this grid.\\
To compute the plug-in estimator, we can use a type of differential equation, in this case a simple recursive formula due to the finiteness of the grid points. This reduces the computational complexity for the computation of the conditional expected future cash flow from $\mathcal{O}(n^3)$ to $\mathcal{O}(n^2)$. We use the decomposition \begin{align*}
	&\boldsymbol A_1({P}_{1}^{(n,\epsilon)},\boldsymbol P_{1}^{(n,\epsilon)},\Lambda_1^{(n,\epsilon)},\boldsymbol \Lambda_1^{(n,\epsilon)})(t) - \boldsymbol A_1(0)
	=:C_1(t)+C_2(t)+C_3(t),\quad t\geq 40,
\end{align*}where $C_1$ contains the first two, $C_2$ the third and $C_3$ the fourth summand of the equation \eqref{eq: twodimensional cash flow}, with the estimated transition rates and transition probabilities plugged in. Now all the three summands can be calculated recursively. Let $\{t_n\}_{n\in I}$ be the the set of all jump points defining the time grid imposed by the data. We start with $t_0=40$ and $C_1(40)=C_2(40)=C_3(40)=0$. We get \begin{align*}
	C_1(t_n)=C_1(t_{n-1})+\int_{t_{n-1}}^{t_n}p^{(n,\epsilon)}_{1,j}(s-)\Big(B_1(\mathrm{d} s)+ b_{13}(s)\Lambda^{(n,\epsilon)}_{1,13}(\mathrm{d} s)\Big).
\end{align*}For the second one, we get \begin{align*}
	C_2(t_n)&=C_2(t_{n-1})+\int_{(40,t_{n-1}-)}^{(65,t_n-)}\int_{u_2}^{t_n}B_2(\mathrm{d} s,u_1)\bigg( 
	\boldsymbol P^{(n,\epsilon)}_{1,(1,1)}(\boldsymbol{u-}) \boldsymbol\Lambda^{(n,\epsilon)}_{1,(1,1)(2,2)}(\mathrm{d}\boldsymbol{u})  \\
	&\hspace{62mm}-
	\sum_{k \in \{3,4\}} \boldsymbol P^{(n,\epsilon)}_{1,(1,2)}(\boldsymbol{u}-) \boldsymbol\Lambda^{(n,\epsilon)}_{1,(1,2)(2,k)}(\mathrm{d}\boldsymbol{u})
	\bigg)\\
	&\hspace{18.5mm}+\int_{t_{n-1}}^{t_n}p\mathds{1}_{\{t\geq 65\}}\mathrm{d} (s)\int_{(40,40)}^{(65,t_{n-1}-)}\rho(u_1)\bigg( 
	\boldsymbol P^{(n,\epsilon)}_{1,(1,1)}(\boldsymbol{u-}) \boldsymbol\Lambda^{(n,\epsilon)}_{1,(1,1)(2,2)}(\mathrm{d}\boldsymbol{u})  \\
	&\hspace{62mm}-
	\sum_{k \in \{3,4\}} \boldsymbol P^{(n,\epsilon)}_{1,\boldsymbol{j}}(\boldsymbol{u}-) \boldsymbol\Lambda^{(n,\epsilon)}_{1,(1,2)(2,k)}(\mathrm{d}\boldsymbol{u})\bigg).
\end{align*} The last integral in the formula for $C_2$ can again be calculated recursively in every step. For $C_3$, we get \begin{align*}
	C_3(t_n)=C_3(t_{n-1})&+ \int_{(40,t_{n-1})}^{(65,t_n)}
	b_{23}(u_2,u_1)\boldsymbol P^{(n,\epsilon)}_{1,(12)}(\boldsymbol{u-})\boldsymbol\Lambda^{(n,\epsilon)}_{1,(1,2)(2,3)}(\mathrm{d}\boldsymbol{u}).
\end{align*}
We estimate in R \cite{R_citation} by generating artificial data following the semi-Markov model described as the market basis using the package \emph{AalenJohansen} by Bladt and Furrer \cite{Bladt_Furrer_R_package}.\\
 We simulate $5,000$ independent and identically distributed realisations for the defined insurance contract with scaled payments.  We use $\operatorname{Unif}(65,120)$ distributed random variables as right censoring.}
 \begin{figure}[h!]
 	\centering 	\subfloat[$n=1000$]{\includegraphics[width=\linewidth]{"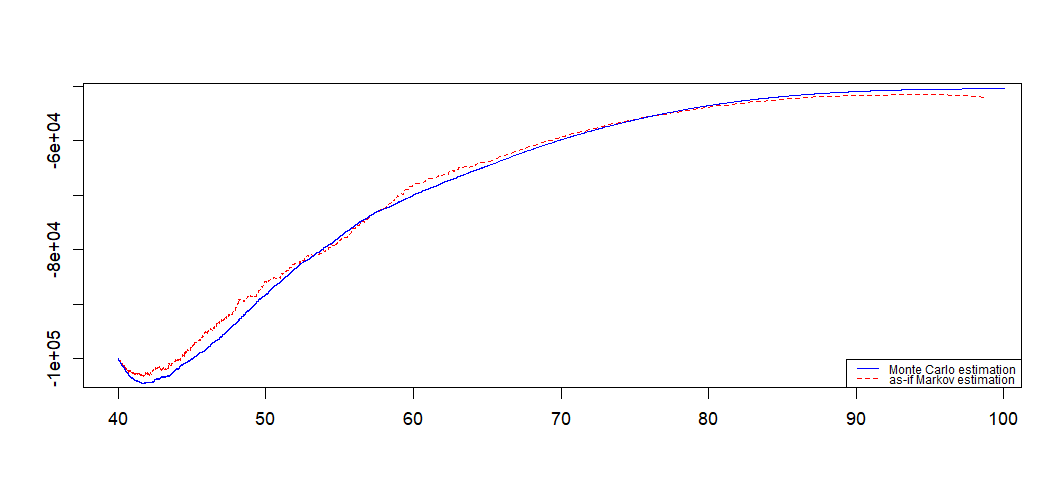"} 	\label{fig: plot_reserve_1000}}
 	\hfill
 	\subfloat[$n=5000$]{\includegraphics[width=\linewidth]{"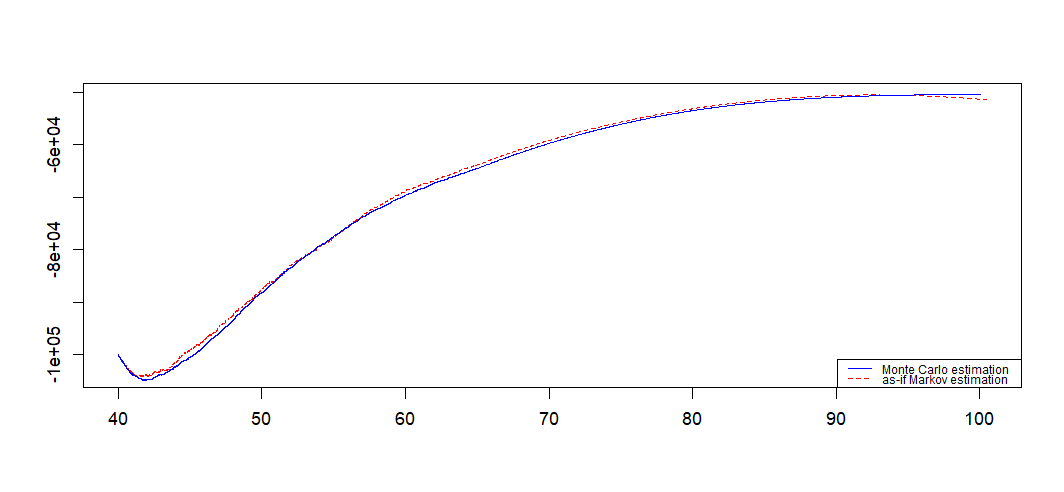"} 	\label{fig: plot_reserve_5000}}
\caption{Expected future cash flow $\boldsymbol{A}_1(t)$ with time horizon $T=100$ in the as-if Markov model estimated with $n=1000$ and $n=5000$ censored individuals, the plug-in estimator and the landmark Nelson-Aalen and Aalen-Johansen estimators versus Monte Carlo estimation with $n=10000$ uncensored individuals.  }
\end{figure}

\textcolor{black}{
  To illustrate the numerical feasibility and strength of the plug-in estimator, we plot in figures \ref{fig: plot_reserve_1000} and \ref{fig: plot_reserve_5000} the estimate for the conditional expected future cash flow of the insurance contract in an as-if Markov model on the market basis for $n=1000$ and $n=5000$ individuals, together with a Monte Carlo simulation of the same conditional expected future cash flow without censoring and $10,000$ individuals for comparison. We see that the bivariate estimator, with $n=5000$, estimates the conditional expected future cash flow of the particular insurance product  similarly to the Monte Carlo estimator, which is using uncensored data and more individuals. Thus, the estimator appears to be a good fit for this type of estimation problem.\\
   A challenge for the interpretation of the estimator $P^{(n,\epsilon)}_{z,(jk)}$ is that in the censored case the bivariate estimator may not satisfy the expected monotonicity and positivity properties. This property is very similar to the findings in \cite{Dabrowska1988} where they found the Kaplan-Meier estimator on the plane to be non-monotonic. This explains the higher volatility observed in the plug-in estimator for $n=1000$ but does not change the fact that the estimator consistently estimates the conditional expected future cash flow, even when censoring is involved. Moreover, the estimators are easy to apply, since we do not need to solve any continuous differential equations, but were able to obtain recursive formulas for the estimators at each level. \\
 We conclude that, despite numerical challenges, the bivariate estimator can be successfully applied to the estimation of conditional expected values of future cash flows in non-Markov models.
The bivariate landmark Nelson-Aalen and Aalen-Johansen estimators, combined with the plug-in estimators for conditional expected future cash flows, provide a versatile tool because the theory applies not only to insurance contracts with contract modifications, but also to other cash flows that have a two-dimensional representation. }

	\section{Concluding Remarks}\label{SectionConclusion}
In this paper, we generalized the landmark Nelson-Aalen and the landmark Aalen-Johansen estimators to the estimation of bivariate conditional transition rates and bivariate transition probabilities in general non-Markov models as introduced by Bathke \& Christiansen \cite{bathke2022two}.
The analysis of intertemporal dependencies via bivariate conditional transition rates is necessary in non-Markov multi-state modelling, in contrast to classical Markov modelling, because the absence of the Markov assumption leads to incomplete distribution knowledge gained from univariate conditional transition rates. \\ 
	The use of analytical techniques well-known from bivariate Kaplan-Meier estimation and conditional Aalen-Johansen estimation leads to uniform convergence of both estimators. We obtain a blueprint for the estimation of bivariate conditional transition rates and transition probabilities from right censored data.\\
	Future research endeavours could further address the question of asymptotic normality and bootstrapping. Additionally, an extension of the uniform convergence to the entire plane, possibly via martingale techniques, is of interest.\\
	Finally, this study closes a gap between techniques in survival analysis and premium and reserve calculation for insurance contracts. In the case of an insurance contract with scaled payments, we construct estimators for the first moment of all future payments \textcolor{black}{and present a proof of concept with a numerical example using simulated data}. This is the basis of any type of risk assessment, where the state process of the insured follows the non-restrictive non-Markov model. Furthermore, estimating bivariate conditional transition rates and transition probabilities allows the estimation of second moments of future liabilities with a canonical one-dimensional cash flow in the non-Markov model. Altogether, this can be used to evaluate the widespread use of Markov assumptions in the reserving and premium calculation for insurance portfolios.
	\subsection*{Declarations}
	\textbf{Conflict of interest} The author has declared no conflict of interest.
	\subsection*{Acknowledgements}
The author would like to express gratitude to Marcus Christiansen and Christian Furrer for their invaluable discussions regarding the direction and theoretical background of this paper.

	\bibliographystyle{acm}
	\bibliography{mybib.bib}
		\appendix
	\section{Appendix}
	The following first two lemmas are two-dimensional generalisations of well-known statements in the field of product integrals. These generalisations have already been mentioned in Gill \& Johansen \cite{gill1990survey} and Gill, van der Laan \& Wellner \cite{gill1995inefficient}.
	\begin{lemma}\label{lem Peano as solution}
		For a monotonically increasing real-matrix valued function $\Lambda$ and a function $\phi(\boldsymbol t)$, the equation \begin{align}
			Y(\boldsymbol t)=\phi(\boldsymbol t)+\int\displaylimits_{( s,\boldsymbol t]}Y(\boldsymbol u^-)\Lambda(\d \boldsymbol u),\label{eq: volterra equation 2}
		\end{align} has the solution
		\begin{align}
			\begin{split}
				\tilde P(\boldsymbol t)&=\phi(\boldsymbol t)+\int\displaylimits_{(s,\boldsymbol t]}\phi(\boldsymbol u^-)\Lambda(\d \boldsymbol u)\mathcal{P}((\boldsymbol u,\boldsymbol t],\Lambda),
			\end{split}\label{eq:Peano-expansion}
		\end{align} where we use the Peano-series definition \begin{align*}
			\mathcal{P}((s,\boldsymbol t],\Lambda)&:=Id+\sum\displaylimits_{n=1}^\infty\;\; \idotsint\displaylimits_{s<\boldsymbol u^{(1)}<...<\boldsymbol u^{(n)}\leq \boldsymbol t}\;\;\Lambda(\d \boldsymbol u^{(1)})\cdots\Lambda(\d \boldsymbol u^{(n)}).
		\end{align*}
		In the definition of the Peano-series we use the usual partial ordering of $\mathbb{R}^2$ for the definition of the domain. 
	\end{lemma}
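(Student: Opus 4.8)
The plan is to construct the solution by Picard iteration (successive approximation) and then identify the iterates with the partial sums of the Peano series in \eqref{eq:Peano-expansion}. Fix $\boldsymbol t$ in a bounded rectangle and set $Y^{(0)}(\boldsymbol t):=\phi^{(\epsilon)}(\boldsymbol t)$, and recursively
\[
Y^{(k+1)}(\boldsymbol t):=\phi^{(\epsilon)}(\boldsymbol t)+\int\displaylimits_{(s,\boldsymbol t]}Y^{(k)}(\boldsymbol u^-)\Lambda^{(\epsilon)}(\d\boldsymbol u).
\]
By induction on $k$ I would show that
\[
Y^{(k)}(\boldsymbol t)=\phi^{(\epsilon)}(\boldsymbol t)+\sum_{n=1}^{k}\;\idotsint\displaylimits_{s<\boldsymbol u^{(1)}<\cdots<\boldsymbol u^{(n)}\le\boldsymbol t}\phi^{(\epsilon)}(\boldsymbol u^{(1)-})\,\Lambda^{(\epsilon)}(\d\boldsymbol u^{(1)})\cdots\Lambda^{(\epsilon)}(\d\boldsymbol u^{(n)}),
\]
i.e. the $k$-th iterate is exactly the $k$-th partial sum of the $\phi^{(\epsilon)}$-weighted Peano series appearing on the right-hand side of \eqref{eq:Peano-expansion} (after expanding $\mathcal{P}((\boldsymbol u,\boldsymbol t],\Lambda^{(\epsilon)})$). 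The inductive step amounts to inserting the recursion and rewriting the nested integral $\int_{(s,\boldsymbol t]}\big(\int_{(s,\boldsymbol u^-]}\cdots\big)\Lambda^{(\epsilon)}(\d\boldsymbol u)$ as a single iterated Lebesgue--Stieltjes integral over the partially ordered simplex $\{s<\boldsymbol u^{(1)}<\cdots<\boldsymbol u^{(n+1)}\le\boldsymbol t\}$; this is a two-dimensional Fubini argument, legitimate because $\Lambda^{(\epsilon)}$ has finite two-dimensional total variation on the rectangle, see Appendix \ref{def:variation}.

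Next I would prove convergence of the series. Dominating the matrix-valued increments of $\Lambda^{(\epsilon)}$ in operator norm by the positive variation measure $\mu$ of $\Lambda^{(\epsilon)}$, and using that the strictly sorted chains $s<\boldsymbol u^{(1)}<\cdots<\boldsymbol u^{(n)}\le\boldsymbol t$ for the $n!$ permutations of $n$ points are pairwise disjoint with union contained in $(s,\boldsymbol t]^n$, while $\mu^{\otimes n}$ is invariant under coordinate permutations, the $n$-th term is bounded by $\|\phi^{(\epsilon)}\|_\infty\,\mu((s,\boldsymbol t])^n/n!$. Hence the series converges absolutely and uniformly on the compact rectangle, so $Y^{(k)}$ converges uniformly to the function $\tilde P^{(\epsilon)}$ defined in \eqref{eq:Peano-expansion}. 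Since $\Lambda^{(\epsilon)}$ has finite variation, uniform convergence of the integrands lets me pass to the limit in the recursion, which shows that $\tilde P^{(\epsilon)}$ solves \eqref{eq: volterra equation 2}. If uniqueness is also wanted, the difference $D$ of two solutions satisfies $D(\boldsymbol t)=\int_{(s,\boldsymbol t]}D(\boldsymbol u^-)\Lambda^{(\epsilon)}(\d\boldsymbol u)$, and iterating $n$ times gives $\|D(\boldsymbol t)\|\le\|D\|_\infty\,\mu((s,\boldsymbol t])^n/n!\to0$, so $D\equiv0$.

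The main obstacle I anticipate is the measure-theoretic bookkeeping in the inductive step: matching the left-limit evaluations $\boldsymbol u^-$, the half-open rectangles $(\boldsymbol u,\boldsymbol t]$, and the strict versus non-strict partial orderings so that the nested two-dimensional integrals telescope precisely into the Peano-series terms, including the correct handling of the atoms of $\Lambda^{(\epsilon)}$ sitting on lines parallel to the coordinate axes. Apart from this, the argument is a routine two-dimensional adaptation of the classical one-dimensional Volterra/Peano series expansion, for which I would cite Gill \& Johansen \cite{gill1990survey} and Gill, van der Laan \& Wellner \cite{gill1995inefficient} as the template.
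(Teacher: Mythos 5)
Your proof is correct and reconstructs in detail the standard Picard/Neumann iteration argument for the bivariate Volterra equation; the paper itself does not give the argument but simply cites Proposition~3.1 of Gill, van der Laan \& Wellner \cite{gill1995inefficient}, which is precisely this line of reasoning, so you are following the intended approach. The two small points you flag as ``bookkeeping'' are indeed the only places that need care and both resolve exactly as you anticipate: taking the left limit $Y^{(k)}(\boldsymbol u^-)$ turns the half-open rectangle $(s,\boldsymbol u]$ into the open one $(s,\boldsymbol u)$, which after Fubini gives precisely the strict-then-closed chain $s<\boldsymbol u^{(1)}<\cdots<\boldsymbol u^{(n+1)}\le\boldsymbol t$ of the Peano series; and your $n!$-bound is valid (and actually an over-estimate) because under the partial ordering the permuted chains are pairwise disjoint but their union is a proper subset of $(s,\boldsymbol t]^n$, so permutation invariance of $\mu^{\otimes n}$ still yields the factorial decay needed for uniform convergence and for the uniqueness iteration.
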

	\begin{proof}
		See the proof of Proposition 3.1 from Gill, van der Laan \& Wellner \cite{gill1995inefficient}.
	\end{proof}

	The next Lemma is a generalisation of the well-known Duhamel equality. 
	
	\begin{lemma}\label{lem: Duhamel}
		For monotonically increasing functions $A,B:\mathbb{R}^2\rightarrow \mathbb{R}^{n\times n}$ we have the following equation\begin{align*}
			\mathcal{P}(R,A)-\mathcal{P}(R,B)=\int\limits_{ R} \mathcal{P}(R\cap (\boldsymbol 0,s),A)\left(A(\d s)-B(\d s)\right)\mathcal{P}(R\cap(s,\infty),B),
		\end{align*}for a rectangle $R\subset\mathbb{R}^2$.
	\end{lemma}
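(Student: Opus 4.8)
The plan is to mimic, term by term, the classical telescoping proof of the one-dimensional Duhamel equality, now applied to the Peano series $\mathcal{P}(R,\cdot)$, using that for monotone integrators of finite variation on $R$ this series is an absolutely convergent sum of iterated integrals over increasing chains in the partial order of $\mathbb{R}^2$ (the convergence being part of the theory underlying Lemma~\ref{lem Peano as solution}; see Gill, van der Laan \& Wellner \cite{gill1995inefficient} and Gill \& Johansen \cite{gill1990survey}). First I would expand both Peano series and subtract, writing $\mathcal{P}(R,A)-\mathcal{P}(R,B)$ as the sum over $n\ge 1$ of the difference of the $n$-fold iterated integral of $A$ against itself and its $B$-analogue, integrated over the increasing chains $\boldsymbol u^{(1)}<\dots<\boldsymbol u^{(n)}$ contained in $R$. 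Inside each such integrand I would apply the purely algebraic identity
\[
\prod_{1\le k\le n}A(\d\boldsymbol u^{(k)})-\prod_{1\le k\le n}B(\d\boldsymbol u^{(k)})=\sum_{j=1}^{n}\Bigl(\prod_{1\le k<j}A(\d\boldsymbol u^{(k)})\Bigr)\bigl(A(\d\boldsymbol u^{(j)})-B(\d\boldsymbol u^{(j)})\bigr)\Bigl(\prod_{j<k\le n}B(\d\boldsymbol u^{(k)})\Bigr),
\]
with the products taken in increasing order of $k$ (the integrands are matrix-valued, so the order must be preserved); in the $j$-th summand every factor with index below the pivot $\boldsymbol u^{(j)}$ carries $A$ and every factor with index above it carries $B$.

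The second step is the bookkeeping. After interchanging $\sum_n$, $\sum_j$ and the integrations, I would re-parametrise each contribution by the pivot $\boldsymbol v:=\boldsymbol u^{(j)}$, together with $m:=j-1$ chain points strictly below $\boldsymbol v$ and $p:=n-j$ chain points strictly above $\boldsymbol v$; the map $(n,j)\mapsto(m,p)$ is a bijection of $\{\,n\ge 1,\ 1\le j\le n\,\}$ onto $\mathbb{Z}_{\ge 0}^2$. Since a finite chain strictly below $\boldsymbol v$ is precisely a finite chain in $R\cap(\boldsymbol 0,\boldsymbol v)$ and one strictly above $\boldsymbol v$ is a finite chain in $R\cap(\boldsymbol v,\infty)$, summing over $m$ (the empty chain contributing the $Id$ term) reassembles $\mathcal{P}(R\cap(\boldsymbol 0,\boldsymbol v),A)$, summing over $p$ reassembles $\mathcal{P}(R\cap(\boldsymbol v,\infty),B)$, and what remains is the integration of $\boldsymbol v$ over $R$ against $A(\d\boldsymbol v)-B(\d\boldsymbol v)$, which gives exactly the right-hand side of the asserted identity. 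Here I would record once that for a general set $S\subseteq\mathbb{R}^2$ the symbol $\mathcal{P}(S,\Lambda)$ is read as the sum over increasing chains contained in $S$, so that the truncated series $\mathcal{P}(R\cap(\boldsymbol 0,\boldsymbol v),A)$ and $\mathcal{P}(R\cap(\boldsymbol v,\infty),B)$ fall under the same definition.

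The step I expect to be delicate is the interchange of this (double) series with the integrations: it is a Fubini--Tonelli argument that requires absolute convergence of the Peano series in total-variation norm, uniform enough to pull the $\boldsymbol v$-integration out of the sums over $m$ and $p$. For monotone $A,B$ of finite variation on $R$ this is exactly the summability used to establish convergence of the two-dimensional Peano series in \cite{gill1995inefficient,gill1990survey}, which I would cite rather than reprove. A way to make the telescoping entirely elementary and free of measure-theoretic issues at atoms is to prove the identity first for $A,B$ that are piecewise constant on a common finite grid, where every ``integral'' is a finite matrix sum, and then pass to the limit along grids refining the (at most countably many) lines of discontinuity of $A$ and $B$, using the continuity of $\Lambda\mapsto\mathcal{P}(R,\Lambda)$ under this approximation. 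As an independent check one may instead argue by uniqueness, exploiting the Volterra recursion $\mathcal{P}((s,\boldsymbol t],\Lambda)=Id+\int_{(s,\boldsymbol t]}\Lambda(\d\boldsymbol u)\,\mathcal{P}((\boldsymbol u,\boldsymbol t],\Lambda)$: both $\mathcal{P}(\cdot,A)-\mathcal{P}(\cdot,B)$ and the proposed expression solve the same inhomogeneous Volterra-type equation, so equality follows from the uniqueness in Lemma~\ref{lem Peano as solution}; the cost of this route is that the difference has to be tracked over all sub-rectangles of $R$, not only over rectangles anchored at a fixed corner.
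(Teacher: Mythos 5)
The paper's own proof of this lemma is a one-line citation to Gill, van der Laan \& Wellner \cite{gill1995inefficient}, Proposition 3.3, so in effect you have supplied the argument the paper defers to that reference: telescoping the Peano series around a pivot $\boldsymbol v=\boldsymbol u^{(j)}$, re-indexing by the lengths $(m,p)$ of the sub-chains below and above it so that they reassemble $\mathcal{P}(R\cap(\boldsymbol 0,\boldsymbol v),A)$ and $\mathcal{P}(R\cap(\boldsymbol v,\infty),B)$, and justifying the interchange of summation and integration by absolute convergence of the Peano series for monotone integrators of finite variation is the standard derivation and matches the cited source. The two alternative routes you sketch (discretisation to piecewise-constant integrators, or uniqueness of the inhomogeneous Volterra solution via Lemma~\ref{lem Peano as solution}) are likewise sound.
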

	\begin{proof}
		See the proof of Proposition 3.3 from Gill, van der Laan \& Wellner \cite{gill1995inefficient}.
	\end{proof}
\begin{lemma}\label{lem:hadamard diff}
	The functional \begin{align*}
A:(F,G)\mapsto\int \frac{1}{F(s)}\d G(s)		
	\end{align*}is continuous at any point $(F,G)$ on the space of cadlag functions with bounded variation endowed with the supremum norm, where $\frac{1}{F}$ is of bounded variation and bounded away from $0$ and $G$ is of bounded variations for sequences $\frac{1}{F_n}$ and $G_n$ that are uniformly of bounded variation as well.
 
\end{lemma}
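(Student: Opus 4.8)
The plan is to reduce the joint continuity to two elementary one-sided estimates, exactly as in the standard treatment of the bilinear map $(H,G)\mapsto\int H\,\d G$ on functions of bounded variation. Fix $(F,G)$ as in the statement and let $(F_n,G_n)$ be a sequence with $\Vert F_n-F\Vert_\infty\to0$, $\Vert G_n-G\Vert_\infty\to0$, such that $\tfrac1{F_n}$ and $G_n$ are of uniformly bounded variation. First I would put the boundedness of $\tfrac1F$ into usable form: being of bounded variation, $\tfrac1F$ is bounded, say $\Vert\tfrac1F\Vert_\infty\le M$, so $|F|\ge 1/M>0$; by the uniform convergence $|F_n|\ge 1/(2M)$ for all $n$ large enough, and only these $n$ are relevant for the limit. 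Then I would split the error as
\begin{align*}
\int\frac1{F_n}\,\d G_n-\int\frac1F\,\d G=\int\Big(\frac1{F_n}-\frac1F\Big)\d G_n+\int\frac1F\,\d(G_n-G),
\end{align*}
and treat the two summands separately, keeping every bound uniform in the upper limit of integration, which is the form the downstream applications actually need.

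For the first summand, the identity $\tfrac1{F_n}-\tfrac1F=\tfrac{F-F_n}{F_nF}$ with the lower bounds above gives $\Vert\tfrac1{F_n}-\tfrac1F\Vert_\infty\le 2M^2\Vert F_n-F\Vert_\infty$, hence the integral is bounded in modulus by $2M^2\Vert F_n-F\Vert_\infty\cdot\sup_n\Vert G_n\Vert_{v_1}^{(1)}$, which tends to $0$ by the uniform bound on the total variation of $G_n$. For the second summand I would integrate by parts: $\int\tfrac1F\,\d(G_n-G)$ is a boundary contribution of the form $\tfrac1F\,(G_n-G)$ evaluated at the two endpoints, minus $\int(G_n-G)(u^-)\,\d\big(\tfrac1F\big)(u)$; the boundary part is bounded by $2M\Vert G_n-G\Vert_\infty$ and the remaining integral by $\Vert G_n-G\Vert_\infty\cdot\Vert\tfrac1F\Vert_{v_1}^{(1)}$, and both vanish because $\tfrac1F$ has bounded variation. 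Adding the two estimates gives the asserted convergence, uniformly in the upper limit; the uniform bounded variation of $\tfrac1{F_n}$ additionally makes the symmetric version of the splitting available (integrating by parts in the first summand instead) and keeps the constants uniform in $n$.

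The step I expect to be the main obstacle is the integration-by-parts identity itself: one must use the version of the Lebesgue--Stieltjes integration-by-parts formula valid for c\`{a}dl\`{a}g functions of bounded variation, with the correct left-limit evaluations of the integrands and the correct accounting of common atoms, and---since this lemma is applied to surfaces in the proof of Theorem~\ref{theo: convergence of Lambda and P}---its two-variable analogue, in which $\Vert\cdot\Vert_{v_1}^{(1)}$ is replaced by the two-dimensional total variation $\Vert\cdot\Vert_{v_1}^{(2)}$ and the single boundary term is replaced by the four corner evaluations over the integration rectangle. With those substitutions the estimates of the preceding paragraph carry over verbatim, and what remains is routine bookkeeping.
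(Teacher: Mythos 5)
Your proof is correct, but it takes a genuinely different route from the paper. The paper disposes of the lemma in two lines by composing two known Hadamard differentiability results: the weak continuous Hadamard differentiability of the bilinear map $B:(F,G)\mapsto\int F\,\d G$ on uniformly bounded-variation sequences, and that of $F\mapsto 1/F$ on functions bounded away from zero, both cited from Gill, van der Laan \& Wellner, Lemma~5.1. You instead give a self-contained elementary argument: the decomposition
\begin{align*}
\int\tfrac1{F_n}\,\d G_n-\int\tfrac1F\,\d G=\int\bigl(\tfrac1{F_n}-\tfrac1F\bigr)\d G_n+\int\tfrac1F\,\d(G_n-G),
\end{align*}
with the first term controlled by $\Vert F_n-F\Vert_\infty$ together with the uniform variation bound on $G_n$, and the second by integration by parts against $\Vert G_n-G\Vert_\infty$ and $\Vert \tfrac1F\Vert_{v_1}$. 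Your estimates are correct (in particular the lower bound $|F_n|\geq 1/(2M)$ eventually and the bound $\Vert\tfrac1{F_n}-\tfrac1F\Vert_\infty\leq 2M^2\Vert F_n-F\Vert_\infty$), and you are right that this is essentially the proof \emph{inside} the cited Hadamard differentiability result, unfolded. What your version buys is transparency — explicit constants, and a clear view of which hypothesis is used where (you even notice, correctly, that the uniform variation bound on $\tfrac1{F_n}$ is not needed for your split, only the variation of the limit $\tfrac1F$ and the uniform variation of $G_n$). What the paper's citation buys is brevity and, importantly, that Gill, van der Laan \& Wellner already cover the two-dimensional case; your final paragraph correctly flags that the bivariate integration-by-parts formula (with its four corner terms and $\Vert\cdot\Vert_{v_1}^{(2)}$) is the only place where "carry over verbatim" hides real bookkeeping, and that is the same gap the paper avoids by referring to the source. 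As a blind reconstruction this is a sound and complete argument for the one-dimensional statement and an honest sketch of the two-dimensional one.
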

\begin{proof}
	This follows immediately by the weak continuous Hadamard differentiability of the functional $B:(F,G)\mapsto \int F(s)\d G(s)$ and the fact that $F\mapsto\frac{1}{F}$ is also weak continuous Hadamard differentiable as long as $F\geq\delta>0$. The statement can be found in Gill, van der Laan \& Wellner\cite{gill1995inefficient} Lemma 5.1. 
\end{proof}

	Additionally, we need a two-dimensional generalisation of the well known Campbell theorem, see for example Milbrodt \& Helbig \cite{milbrodt1999mathematische}.
	\begin{theorem}[Campbell Theorem]
		Let $\eta$ be a point process on $(\mathbb{R}^d,\mathbb{B}(\mathbb{R}^d))$ with intensity measure $\lambda$ and let $u:\mathbb{R}^d\rightarrow\mathbb{R}$ be a measurable function. Then \begin{align*}
			\int u(x)\eta(\d x)
		\end{align*} is a random variable and \begin{align}
			\mathbb{E}\left[\int u(x)\eta(\d x)\right]=\int u(x)\lambda(\d x),\label{eq:campbell}
		\end{align}
		whenever $u\geq0$ or $\int | u(x)| \lambda(\d x)<\infty$.
	\end{theorem}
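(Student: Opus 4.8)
The plan is to prove the identity by the standard measure-theoretic \emph{algebraic induction} (the ``standard machine''): both sides of \eqref{eq:campbell} are linear in $u$ and compatible with increasing limits, so it suffices to verify the statement successively for indicators, non-negative simple functions, non-negative measurable functions, and finally integrable functions. The dimension $d$ plays no role in this scheme, which is precisely why the one-dimensional argument in Milbrodt \& Helbig \cite{milbrodt1999mathematische} transfers verbatim; the only reason to record the statement is that Lemma \ref{lem:change between censored and non-censored data} invokes it with $d=2$.

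First I would settle measurability. Since $\eta$ is a point process, i.e.\ a random element of the space of locally finite counting measures on $(\mathbb{R}^d,\mathbb{B}(\mathbb{R}^d))$ equipped with the $\sigma$-algebra generated by the evaluation maps $m\mapsto m(A)$, $A\in\mathbb{B}(\mathbb{R}^d)$, the map $\omega\mapsto\eta(\omega)(A)$ is a random variable for every Borel $A$. Hence for $u=\mathds{1}_A$ the integral $\int u(x)\,\eta(\d x)=\eta(A)$ is a random variable; for a non-negative simple $u=\sum_{k=1}^m c_k\mathds{1}_{A_k}$ it is a finite linear combination of the $\eta(A_k)$, hence again a random variable; and for a general measurable $u\geq0$ one picks non-negative simple $u_n\uparrow u$ and uses that $\int u_n\,\d\eta$ is a random variable together with the fact that a pointwise limit of random variables is a random variable. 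The signed case then follows from $u=u^+-u^-$ under the stated integrability.

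Next the identity itself. For $u=\mathds{1}_A$ it is exactly the definition of the intensity measure, $\E[\int\mathds{1}_A\,\d\eta]=\E[\eta(A)]=\lambda(A)=\int\mathds{1}_A\,\d\lambda$, and by linearity of the path-wise integral with respect to $\eta$ and of the expectation it extends to non-negative simple functions. For a general non-negative measurable $u$, choose non-negative simple $u_n\uparrow u$ and apply monotone convergence twice: path-wise, $\int u_n\,\d\eta(\omega)\uparrow\int u\,\d\eta(\omega)$ for each $\omega$ because $\eta(\omega)$ is a measure; then $n\mapsto\E[\int u_n\,\d\eta]$ is non-decreasing, so monotone convergence for $\Pro$ gives $\E[\int u\,\d\eta]=\lim_n\E[\int u_n\,\d\eta]=\lim_n\int u_n\,\d\lambda=\int u\,\d\lambda$, with both sides possibly $+\infty$. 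Finally, if $\int|u|\,\d\lambda<\infty$, apply the non-negative case to $u^+$ and $u^-$ to get $\E[\int u^{\pm}\,\d\eta]=\int u^{\pm}\,\d\lambda<\infty$; thus $\int u^{\pm}\,\d\eta<\infty$ almost surely, $\int u\,\d\eta$ is well defined, and subtracting the two identities yields \eqref{eq:campbell}.

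There is no genuine obstacle here; the only two points that deserve a line of care are (i) the measurability of $\omega\mapsto\int u(x)\,\eta(\omega)(\d x)$, which is immediate from the definition of a point process as a random counting measure and the usual simple-function approximation, and (ii) the legitimacy of the double use of monotone convergence (path-wise for the random measure $\eta(\omega)$, and then for $\Pro$), which is routine once one checks that each intermediate quantity is non-negative and non-decreasing in $n$.
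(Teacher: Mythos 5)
Your proof is correct and follows essentially the same route as the paper, which simply cites Milbrodt \& Helbig and observes that the one-dimensional argument adapts verbatim; the ``standard machine'' (indicators $\to$ simple functions $\to$ monotone limits $\to$ $u^{+}-u^{-}$) you spell out is exactly that adaptation, and your observation that $d$ plays no role is precisely why the paper treats the generalisation as immediate.
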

	\begin{proof}
The proof of this can be easily adapted from the proof of the one-dimensional Campbell Theorem, see for example Milbrodt \cite{milbrodt1999mathematische}.
	\end{proof}
	For the estimation of integrals, we use the so called one and two-dimensional variation of a process. We do a short introduction here. 
	
	\begin{definition}\label{def:variation}
		We define the one-dimensional variation of a function $f:[a,b]\rightarrow \mathbb{R}$ as \begin{align*}
			\Vert f\Vert_{v_1}^{(1)}:= \Vert f\Vert_\infty+ \sup_{\tau=\{t_i\}_{i\leq n}}\sum\displaylimits_{i=1}^{n}|f(t_i)-f(t_{i-1})|,
		\end{align*} where the supremum goes over all finite partitions $\tau$ of $[a,b]$. The two-dimensional variation of a function $f:[a,b]\times[c,d]\rightarrow \mathbb{R}$ is defined as \begin{align*}
			\Vert f\Vert_{v_1}^{(2)}&:=\sup_{\tau=\{t_i\}\times\{\tilde t_j\}_{i,j\leq n}}\sum_{i=1}^n\sum_{j=1}^n |f(t_{i-1},\tilde t_{j-1})-f(t_{i-1},\tilde t_{j})-f(t_{i},\tilde t_{j-1})+f(t_{i},\tilde t_{j})|\\
			&\;+ \Vert f(a,\cdot)\Vert_{v_1}^{(1)}+ \Vert f(\cdot,c)\Vert_{v_1}^{(1)}-\Vert f\Vert_\infty,
		\end{align*}where the supremum goes over all finite partitions of $[a,b]\times[c,d]$.
	\end{definition} This variation definition is mainly used for the following integral inequality 
\begin{lemma}\begin{align*}
		\left|\int f(\boldsymbol u) g(\d \boldsymbol u)\right|\leq \Vert f\Vert_\infty \Vert g\Vert_{v_1}^{(2)},
	\end{align*}when $f$ is bounded and $g$ has finite two-dimensional variation.
\end{lemma}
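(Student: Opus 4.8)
The plan is to reduce the estimate to the statement that the total mass of the signed Lebesgue--Stieltjes measure induced by $g$ is dominated by $\Vert g\Vert_{v_1}^{(2)}$. Write $D=[a,b]\times[c,d]$ for the underlying rectangle, with corner $(a,c)$ as in Definition \ref{def:variation}, and let $\mu_g$ be the signed measure that $g$ generates on $D$; here $\int f(\boldsymbol u)\, g(\d\boldsymbol u)$ is, by the convention fixed after \eqref{I_represent_by_N}, the Lebesgue--Stieltjes integral $\int_D f\,\d\mu_g$. Since $g$ has finite two-dimensional variation, $\mu_g$ is a finite signed measure, and by the Hahn--Jordan decomposition $\mu_g=\mu_g^+-\mu_g^-$ with total variation measure $|\mu_g|=\mu_g^++\mu_g^-$. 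First I would record the elementary estimate
\begin{align*}
\Big|\int f(\boldsymbol u)\, g(\d\boldsymbol u)\Big|=\Big|\int_D f\,\d\mu_g\Big|\le\int_D|f|\,\d|\mu_g|\le\Vert f\Vert_\infty\,|\mu_g|(D),
\end{align*}
which uses nothing but boundedness of $f$, so that the whole problem is transferred onto showing $|\mu_g|(D)\le\Vert g\Vert_{v_1}^{(2)}$.

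For this second step I would use the characterisation of $|\mu_g|(D)$ as the supremum of $\sum_k|\mu_g(D_k)|$ over finite partitions of $D$ into sub-rectangles, together with the fact that $\mu_g$ of a sub-rectangle is a rectangular increment of $g$, and that $\mu_g$ of a degenerate sub-rectangle on the lower or left boundary of $D$ is a one-dimensional increment of $g(\cdot,c)$ or $g(a,\cdot)$ (with a point mass at $(a,c)$). Sorting the members of an arbitrary such partition into those lying in the interior $(a,b]\times(c,d]$ and those lying on the two boundary segments $[a,b]\times\{c\}$ and $\{a\}\times[c,d]$, the interior contribution is bounded by the double rectangular-increment sum in Definition \ref{def:variation}, while the two boundary contributions are bounded by $\Vert g(\cdot,c)\Vert_{v_1}^{(1)}$ and $\Vert g(a,\cdot)\Vert_{v_1}^{(1)}$; the term $-\Vert g\Vert_\infty$ in Definition \ref{def:variation} is there to absorb the overlap at the shared corner $(a,c)$. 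Passing to the supremum then gives $|\mu_g|(D)\le\Vert g\Vert_{v_1}^{(2)}$ and hence the lemma. A measure-free variant of the same argument would approximate $f$ uniformly by step functions on finer and finer grids, write the integral of a step function as the associated finite Riemann--Stieltjes double sum $\sum_{i,j}f(\boldsymbol\xi_{ij})\,\Delta_{ij}g$ plus boundary terms, bound its modulus directly by $\Vert f\Vert_\infty$ times the partition sums occurring in Definition \ref{def:variation}, and pass to the limit by dominated convergence against $|\mu_g|$.

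The hard part will be the bookkeeping in this second step: matching the total variation of the two-dimensional Lebesgue--Stieltjes measure --- in particular the pieces of it sitting on the lower and left edges of $D$ and at the corner --- to the exact normalisation of $\Vert\cdot\Vert_{v_1}^{(2)}$ chosen in Definition \ref{def:variation}. The remaining steps are routine, and the two displayed inequalities are precisely the two-dimensional counterpart of the classical one-dimensional bound $|\int f\,\d g|\le\Vert f\Vert_\infty\,\Vert g\Vert_{v_1}^{(1)}$; the two-dimensional facts about rectangular increments and total variation needed here can be taken from Gill, van der Laan \& Wellner \cite{gill1995inefficient}.
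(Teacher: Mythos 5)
The paper does not actually prove this lemma but defers to Theorem 5 of Adams \& Clarkson, so your measure-theoretic argument from first principles is a genuinely different and more self-contained route. Your first step, bounding $|\int f\,\d\mu_g|\le\Vert f\Vert_\infty\,|\mu_g|(D)$ and thereby transferring the whole burden onto the inequality $|\mu_g|(D)\le\Vert g\Vert_{v_1}^{(2)}$, is standard and correct, and sorting partition cells into interior rectangles versus lower/left boundary pieces is the right strategy for that second inequality.

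But your account of the second step is not sound, and the part you call routine is exactly where the argument breaks. You assert that the $-\Vert g\Vert_\infty$ term in Definition \ref{def:variation} "absorbs the overlap at the shared corner $(a,c)$", but a genuine corner correction would be of the order $|g(a,c)|$, whereas $\Vert g\Vert_\infty$ is in general strictly larger, so the subtraction over-corrects. Worse, the paper's integrals run over half-open rectangles $(s,\boldsymbol t]$, so the associated Lebesgue--Stieltjes measure carries no mass on the lower or left edges at all, and the relevant total variation on the domain is exactly the interior double-increment sum. The inequality would then require $\Vert g(a,\cdot)\Vert_{v_1}^{(1)}+\Vert g(\cdot,c)\Vert_{v_1}^{(1)}\ge\Vert g\Vert_\infty$, which fails for functions vanishing on the lower and left boundary: with $g(u_1,u_2)=u_1u_2$ on $[0,1]^2$ one has $\Vert g\Vert_{v_1}^{(2)}=1+0+0-1=0$ yet $\int 1\,\d g=1$. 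So before your sketch can be completed you would need to resolve whether the $-\Vert f\Vert_\infty$ in Definition \ref{def:variation} should be something like $-|f(a,c)|$, or whether the integration domain is meant to be closed and carry boundary mass, and reconcile the resulting normalisation with Adams \& Clarkson's Theorem 5; declaring the remaining bookkeeping routine is not warranted.
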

	The proof bases on work from Adams \& Clarkson \cite{clarkson1933definitions} and \cite{adams1934properties} , see Theorem 5.
\end{document}